\definecolor{darkred}{rgb}{1,0,0} 
\definecolor{darkgreen}{rgb}{0,0.8,0}
\definecolor{darkblue}{rgb}{0,0,1}
\numberwithin{equation}{section}
\newtheorem {Theorem}{Theorem}
\numberwithin{Theorem}{section}
\newtheorem {Lemma}[Theorem]    {Lemma}
\newtheorem {Proposition}[Theorem]{Proposition}
\theoremstyle{definition}
\newtheoremstyle{TheoremForIntro} 
        {.6em}{.6em}              
        {\itshape}                      
        {}                              
        {\bfseries}                     
        {. }                             
        { }                             
        {\thmname{#1}\thmnote{ \bfseries #3}}
    \theoremstyle{TheoremForIntro}
\chardef\csname pre amssym.def at\endcsname=\the\catcode`\@
\def\undefine#1{\let#1\undefined}
\def\newsymbol#1#2#3#4#5{\let\next@\relax
 \ifnum#2=\@ne\let\next@\msafam@\else
 \ifnum#2=\tw@\let\next@\msbfam@\fi\fi
 \mathchardef#1="#3\next@#4#5}
\def\mathhexbox@#1#2#3{\relax
 \ifmmode\mathpalette{}{\m@th\mathchar"#1#2#3}%
 \else\leavevmode\hbox{$\m@th\mathchar"#1#2#3$}\fi}
\def\hexnumber@#1{\ifcase#1 0\or 1\or 2\or 3\or 4\or 5\or 6\or 7\or 8\or
 9\or A\or B\or C\or D\or E\or F\fi}
\font\teneufm=eufm10
\font\seveneufm=eufm7
\font\fiveeufm=eufm5
\newcommand{\g}{{\mathcal g}}
\def    \HH      {{\mathbb H}}
\def    \C      {{\mathbb C}}
\def    \R      {{\mathbb R}}
\def    \Z      {{\mathbb Z}}
\def    \M      {{\mathbb M}}
\def    \P    {{\mathbb P}}
\def    \tr     {\operatorname{tr}}
\newcommand{\defin}[1]{{{\itshape #1}}}
\definecolor{NoteColor}{rgb}{1,0,0}
\author{Daniele Alessandrini}
\address{Daniele Alessandrini, Universitaet Heidelberg, INF 288, 69120, Heidelberg, Germany}
\email{daniele.alessandrini@gmail.com}
\thanks{This work was started when both authors were visiting MSRI, research at MSRI is supported in part by NSF grant DMS-0441170. Both authors acknowledge the support from U.S. National Science Foundation grants DMS 1107452, 1107263, 1107367 ``RNMS: GEometric structures And Representation varieties'' (the GEAR Network). 
}
\author{Qiongling Li}
\address{Qiongling Li, Aarhus University, Ny Munkegade 118, 8000 Aarhus C, Denmark}
\email{qiongling.li@gmail.com}
\subjclass[2010]{Primary 57M50, 53C07; Secondary 58E12, 58E20}
\title{AdS 3-manifolds and Higgs bundles}
\date{\today}
\begin{document}

\begin{abstract}
In this paper we investigate the relationships between closed AdS $3$-manifolds and Higgs bundles. We have a new way to construct AdS structures that allows us to see many of their properties explicitly, for example we can recover the very recent formula by Tholozan for the volumes. 

We also find applications to the theory of minimal immersions into quadrics with their natural pseudo-Riemannian structure: using the geometry of the AdS manifolds we can characterize the representations admitting equivariant minimal immersions of the Poincar\'e disc into the Klein quadric, the Grassmannian $\text{Gr}(2,4)$, and understand the geometry of these minimal immersions. 
\end{abstract}

\maketitle

\section{Introduction}

The theory of Higgs bundles, founded by Hitchin \cite{HitchinSelfDuality} and Donaldson \cite{DonaldsonTwisted}, and generalized in \cite{SimpsonHBandLocalSys} and \cite{Corlette}, has been very useful in the study of representations of surface groups. This is due to Hitchin's equations, a system of global elliptic PDEs that gives homeomorphisms between the moduli spaces of Higgs bundles and character varieties, providing a powerful tool for the study of the global structure of these spaces, e.g. determining the number of connected components and their topology. More difficult is to get information about a single representation from the corresponding Higgs bundle, since solutions of Hitchin's equations are very non-explicit. The problem of giving a geometric interpretation to the solutions of Hitchin's equations is one of the motivations for this paper.

In this paper we investigate the relationships between Higgs bundles and geometric structures on $3$-manifolds. These relationships were explored in Baraglia's thesis \cite{BaragliaThesis}, and more recently in the present work, in some other work in progress of the authors and of the authors with Brian Collier. A geometric structure is determined by a developing pair, a pair consisting of a representation and a developing map (see section \ref{sec:geometricstructures}). The Higgs bundle encodes the representation in the usual way, we only need to describe the developing map in terms of the Higgs bundle data and the solutions of Hitchin's equations. 

The geometric structures we study in this paper are the $3$-dimensional Anti-de Sitter structures (abbreviated in AdS structures), Lorentz metrics with constant curvature $-1$. They initially arose as models for general relativity, but their rich mathematical theory makes them very interesting geometric objects, independently on physical applications. The theory of closed and open AdS $3$-manifolds are quite different, here we will deal only with closed AdS $3$-manifolds.  

The AdS $3$-manifolds are locally modeled on the Anti-de Sitter $3$-space $\M$ (see section \ref{sec:AdS-geometry}), whose isometry group is isomorphic to $O(2,2)$. If $\rho_1,\rho_2$ are two representations of the fundamental group of a closed surface $S$ in $SL(2,\R)$, their tensor product $\rho_1 \otimes \rho_2$ is a representation in $O(2,2)$. We will use the theory of Higgs bundles for $SL(2,\R)$ to describe the tensor product $\rho_1 \otimes \rho_2$, and to construct a circle bundle $U$ over $S$. The topology of this circle bundle is explicitly determined in terms of the representations $\rho_1,\rho_2$. 

If the two representations satisfy a condition of domination (see section \ref{sec:domination}), we will construct an AdS structure on $U$ with holonomy $\rho_1\otimes \rho_2$, see section \ref{sec:construction}. This gives a new proof of a fundamental theorem originally proved by Salein in \cite{SaleinExotiques}, and later reproved in \cite{GueritaudKassel}.

In our proof, the structure we construct has a natural parametrization, given in terms of the solutions of Hitchin's equations, and some properties of the AdS manifold can be seen easily using this parametrization. We can determine the underlying topology of the AdS manifold, we can see that the fibers of the circle bundle $U$ are time-like geodesics for the AdS structure (see section \ref{sec:fibers}), and that the manifold is the quotient of the model space $\M$ by the representation $\rho_1\otimes \rho_2$, thus obtaining that this representation acts freely and properly discontinuously on $\M$ (see section \ref{sec:discontinuous}). 

We can also easily compute the volume of the AdS structure with an explicit formula that shows that the volume only depends on the Euler numbers of the representations $\rho_1,\rho_2$. This result was cited as Question 2.3 in the list of open problems \cite{BBDGGKKSZ}. It was first answered some months ago in Nicolas Tholozan's thesis \cite{TholozanThese}, see also \cite{TholozanVolumes}. Labourie then related the volumes of AdS $3$-manifolds with the Chern-Simons invariants. In this paper we give a new proof of the formula, see section \ref{sec:volumes}.  

In this way, we can recover much of the theory of closed AdS $3$-manifolds, with new methods based mainly on Higgs bundles, harmonic maps and study of solutions of Hitchin's equations.

As an original result for this paper, given an AdS manifold with holonomy $\rho_1\otimes \rho_2$, we can use its parametrization  as a circle bundle, and the AdS structure on the circle fibers to construct $\rho_1\otimes \rho_2$-equivariant minimal immersions of the hyperbolic plane in the Klein quadric, the Grassmannian $\text{Gr}(2,4)$ of $2$-planes in $\R^4$, equipped with its natural pseudo-Riemannian metric. This gives a characterization of the representations in $O(2,2)$ that admit equivariant minimal surfaces in the Klein quadric.

We can also characterize the conformal structure of the minimal surfaces we construct, this is the conformal structure with vanishing Pfaffian found by Tholozan in \cite{TholozanDeforming}. This part is in section \ref{sec:fibers}, Theorems \ref{thm:harmimm} and \ref{thm:minsurf}. 

To obtain our results, we also complete the description of the Higgs bundles for $SL(2,\R)$, see theorems \ref{thm:splitting} and \ref{metricandvolume}.

At first glance, it seems we just construct and study some examples of closed AdS $3$-manifolds. We would like to remark that these examples are actually very general, by understanding them we can understand all closed AdS $3$-manifolds: given a closed AdS $3$-manifold $Y$, there exist representations $\rho_1, \rho_2$ of a surface group in $SL(2,\R)$ with the property that $\rho_1$ strictly dominates $\rho_2$, such that $Y$ and the quotient $\M / \rho_1\otimes \rho_2$ have a common finite covering. In this way it is possible to recover most properties of $Y$ from the properties of $\M / \rho_1\otimes \rho_2$, for example it is possible to compute the volume of $Y$. This property of the common finite covering comes from the results of Klingler \cite{KlinglerCompletude}, Kulkarni and Raymond \cite{KulkarniRaymond} and Kassel \cite{KasselThese}.

\subsection*{Acknowledgments} We need to thank many people for useful conversations and comments about this topic. In particular we wish to thank Brian Collier, Bill Goldman, Fanny Kassel, Nicolas Tholozan, Anna Wienhard and Mike Wolf.

\section{Anti-de Sitter 3-manifolds} \label{sec:AdS-geometry}  \label{sec:sketch-construction}  \label{sec:geometricstructures}

Given a $3$-manifold $N$, an \defin{Anti-de Sitter structure} on $N$, or briefly an \defin{AdS structure} on $N$, is a Lorentz metric of constant curvature $-1$. Recall that a \defin{Lorentz metric} is a pseudo-Riemannian metric of signature $(2,1)$. 

Every AdS structure is locally isometric to a model space, the \defin{Anti-de Sitter space}, that can be described explicitly in the following way. 

Let $Q$ denote a non-degenerate symmetric bilinear form on $\R^4$ with signature $(2,2)$. The \defin{Anti-de Sitter space} is identified with the unit sphere of $Q$:
$$\M = \{ x \in \R^4 \ |\ Q(x,x) = 1 \} $$
For every point $x \in \M$, the tangent space to $\M$ at $x$ is given by $x^{\perp Q} $, the orthogonal with reference to $Q$. The bilinear form $Q$, restricted to every tangent space $T_x \M = x^{\perp Q} $, gives a bilinear form of signature $(2,1)$. This defines the Lorentz metric on the model space $\M$. With our conventions, a tangent vector $v\in T_x \M$ is \defin{time-like} if $Q(v,v)>0$, \defin{space-like} if $Q(v,v)<0$, and \defin{light-like} if $Q(v,v)=0$. 

The group of isometries of $\M$ is the orthogonal group preserving $Q$, here denoted by $O(2,2)$. Here we will mainly use the subgroup $SO_0(2,2)$, the connected component of the identity of $O(2,2)$. The subgroup $SO_0(2,2)$, the group of isometries preserving both space and time orientation, can be identified with $SL(2,\R) \times SL(2,\R) / \Z_2$ in the following way. Let $\omega$ a volume form on $\R^2$. The tensor product $\R^2 \otimes \R^2$ inherits the bilinear form $Q = \omega\otimes\omega$, this form is symmetric and with signature $(2,2)$. 
Given matrices $A,B\in SL(2,\R)$, they act on $\R^2$ preserving $\omega$, hence their tensor product $A\otimes B$ acts on $\R^2 \otimes \R^2$ preserving $Q$. If $A=B=-\mbox{Id}$, the action is trivial. 

Given a $3$-manifold $N$, we want to construct AdS structures on $N$. We will use the model space $\M$ and mirror its local geometry on $N$. More precisely, $N$ is covered by open charts $\{U_i\}$ with associated diffeomorphism $\phi_i$ from $U_i$ to an open subset of $\M$. Whenever two open sets $U_i,U_j$ intersect, the transition functions have to be locally the restrictions of elements of $O(2,2)$. If the transition functions lie in $SO_0(2,2)$, the structure has space and time orientation. 

This construction comes from the theory of geometric structures, or $(G,X)$ structures, where $G$ is a Lie group acting transitively and effectively on a manifold $X$. In our case, $G = SO_0(2,2)$, and $X=\M$. For the general theory of geometric structures, the reader is referred to \cite{GoldmanSurvey} or \cite{ThurstonBook}. A \defin{geometric structure} is defined as a maximal atlas of charts ${\{(U_i,\phi_i)\}}_{i\in I}$ satisfying the properties given above.

An equivalent way to see a geometric structure is via the \defin{development pair}, $(D,h)$, where $h:\pi_1(N) \rightarrow G$ is a representation called \defin{holonomy representation}, and $D:\widetilde{N} \rightarrow X$ is a $\rho$-equivariant local diffeomorphism, called \defin{development map}. 

This paper was inspired by the following question: given a representation $\rho:\pi_1(N) \rightarrow G$, how can we construct a geometric structure on $N$ with holonomy $\rho$?

To address this question, we can use another equivalent way to see a geometric structure, called the \defin{graph} of a geometric structure (see \cite{GoldmanSurvey} for details).  

Given a representation $\rho:\pi_1(N) \rightarrow G$, and an effective action of $G$ on $X$, we can construct a unique flat bundle $p:B \rightarrow N$ over $N$ with fiber $X$, structure group $G$, and monodromy equal to $\rho$. 
  
The total space $B$ is given by 
$$ B = X_\rho = \left(\widetilde{N} \times X\right)/ \pi_1(N) $$
where the group $\pi_1(N)$ acts diagonally on the product, with the natural action on $\widetilde{N}$ and with the representation $\rho$ on $X$. The space $X_\rho$ has a natural map to $N = \widetilde{N}/\pi_1(N)$ that is a fiber bundle. The flat structure on the product $\widetilde{N} \times X$ descends to the quotient. The monodromy of the flat bundle $B = X_\rho$ is exactly the representation $\rho$, so the flat bundle $X_\rho$ encodes the representation.

To construct a geometric structure with holonomy $\rho$, we only need to add one more piece of information, the developing map. We need to interpret the developing map in the language of flat bundles. 

The notion of a $\rho$-equivariant map can be translated very naturally: it is a section of the bundle $X_\rho$. Given a section $s:N \rightarrow X_\rho$, it can be lifted to the universal covering $\widetilde{s}:\widetilde{N} \rightarrow \widetilde{X_\rho}$, where  $\widetilde{X_\rho}$ is the pull back of $X_\rho$ to $\widetilde{N}$. Since $\widetilde{X_\rho} = \widetilde{N} \times X$, the projection on the second component gives a $\rho$-equivariant map $\widetilde{N} \rightarrow X$. By reversing the construction, every $\rho$-equivariant map comes from a section. 

Following \cite{GoldmanSurvey}, we call \defin{transverse section} a section whose associated $\rho$-equivariant map is a local diffeomorphism, i.e. a developing map of a geometric structure.

In this paper we want to use the technique of graph of geometric structures to construct AdS structures with prescribed holonomy.

Let $S$ be a closed surface, and $\rho_1,\rho_2:\pi_1(S) \rightarrow SL(2,\R)$ be two reductive representations. Their tensor product $\rho=\rho_1 \otimes \rho_2$ is a representation in $SO_0(2,2)$ by the isomorphism $SO_0(2,2) = (SL(2,\R) \times SL(2,\R)) / \Z_2$ explained in section \ref{sec:AdS-geometry}. Let $p:U \rightarrow S$ be a circle bundle over $S$. The representation $\rho$ induces 
$$\bar{\rho} = \rho \circ p_*:\pi_1(U) \rightarrow SO_0(2,2),$$
a representation of $\pi_1(U)$ that is trivial on the fibers. We want to construct an AdS structure on the 3-manifold $U$ with holonomy $\bar{\rho}$. 

We denote by $E$ the vector bundle $(\R^4)_\rho \rightarrow S$. Since $\rho$ is a representation in $SO_0(2,2)$, the bundle $E$ is naturally equipped with a symmetric bilinear form $Q$ of signature $(2,2)$ on every fiber. The fiber bundle $\M_\rho \rightarrow S$ is a sub-bundle of the vector bundle $E$, whose fibers are defined by the equation $Q(v,v)=1$. To construct an AdS structure on $U$, we need to consider the pull-back bundles $p^*E \rightarrow U$ and $p^*\M_\rho = \M_{\bar{\rho}} \rightarrow U$, and find a transverse section $s:U \rightarrow \M_{\bar{\rho}}$.

It will not be possible to achieve this for every representation $\rho_1, \rho_2$ of $\pi_1(S)$, and for every circle bundle $U$, we need to add some hypothesis. To choose a suitable circle bundle $U$, we need to understand the structure of the bundle $E \rightarrow S$. Denote by $e_1, e_2 \in [2-2g,2g-2]\cap 2\Z$ the Euler numbers of $\rho_1, \rho_2$ respectively. We will see in the next section that the bundle $E$ can be split as a direct sum of $2$ vector bundles of rank $2$, $E = F_1 \oplus F_2$ such that $F_1$ has Euler class $|e_2-e_1|$ and $F_2$ has Euler class $|e_1+e_2|$. The two sub-bundles can be chosen such that $F_1 \perp_Q F_2$, and such that $F_1$ is time-like and $F_2$ is space-like. 

We will choose $U$ as the circle bundle 
$U = \{ v \in F_1 \ |\ Q(v,v) = 1 \}. $
This is the unit part of $F_1$, hence a circle bundle with Euler class $|e_2-e_1|$. 
We can construct a very natural section $s: U \rightarrow \M_{\bar{\rho}}$, the tautological section that associates to every point $v$ of $U$ the same point $v$ seen as a point of $\M_{\bar{\rho}}$.

The last thing needed to construct the AdS structure on $U$ is to verify the transversality condition of the section $s$. Since $E$ is a vector bundle, its flat structure is described by a flat connection $\nabla$, and the transversality condition can be formulated in terms of the derivatives of  $s$ with reference to  $\nabla$.  

To choose the sub-bundles $F_1$, $F_2$ in the right position with reference to $\nabla$, and to compute the derivatives of $s$, we need some special coordinates on the surface $S$ and the bundle $E$ that make possible to write $\nabla$ explicitly enough to do computations. For this we will use the tool given by Higgs bundles.

\section{Higgs bundles and flat bundles} \label{sec:higgsandflat}

The two $SL(2,\R)$ representations $\rho_1,\rho_2$ of the previous section, with Euler numbers $e_1, e_2$, correspond to flat unimodular vector bundles $(E_1,\nabla_1,\omega_1), (E_2,\nabla_2,\omega_2)$, where the $\nabla_i$ are flat connections and the $\omega_i$ are $\nabla_i$-parallel volume forms. In a local frame, we will write $\nabla_i = d + A_i$, where $A_i$ is the connection form. 

The representation $\rho = \rho_1 \otimes \rho_2$ corresponds to the flat bundle $(E,\nabla,Q)$, where $E = E_1 \otimes E_2$, $Q = \omega_1 \otimes \omega_2$, and the connection can be described, in the compatible tensor frame, as $\nabla = d + A_1 \otimes \mbox{Id} + \mbox{Id} \otimes A_2$.  The bilinear form $Q$ is symmetric with signature $(2,2)$ (see section \ref{sec:AdS-geometry}) and it is parallel for $\nabla$.

We will also consider the corresponding complexified bundles $(E_i^\C,\nabla_i^\C,\omega_i^\C)$ and $(E^\C,\nabla^\C,Q^\C)$. Corresponding to the embeddings $E_i \subset E_i^\C$ and $E \subset E^\C$, there are $\C$-anti-linear involutions that we will denote by $\tau_i:E_i^\C \rightarrow E_i^\C$ and $\tau:E^\C \rightarrow E^\C$, such that $E_i = \{v \in E_i^\C \ |\ \tau_i(v)=v\}$ and $E = \{v \in E^\C \ |\ \tau(v)=v\}$. The involution is called the \defin{real structure}. Hence the full structure on the complexified bundles is $(E_i^\C,\nabla_i^\C,\omega_i^\C,\tau_i)$ and $(E^\C,\nabla^\C,Q^\C,\tau)$.

To describe more explicitly the bundles $E_i^\C,E^\C$, we will use the theory of Higgs bundles.
Choose a holomorphic structure $\Sigma$ on the surface $S$ and denote by $K$ its canonical bundle.
There is a unique $\rho_i$-equivariant harmonic map $\psi_i:\widetilde{\Sigma} \rightarrow \HH$ to the hyperbolic plane $\HH=SL(2,\R)/SO(2) \subset SL(2,\C)/SU(2)$, the space of all Hermitian metrics on $\C^2$. Hence, the $\rho_i$-equivariant map $\psi_i$ corresponds to a Hermitian metric $H_i$ that, together with $\nabla_i^\C$,
defines a holomorphic structure on $E_i^\C$, and holomorphic Higgs fields $\phi_i \in H^0(\Sigma,\mbox{End}(E_i^\C)\otimes K)$. The bundle $E_i^\C$ splits as a direct sum of a holomorphic line bundle and its inverse:
$$E_1^\C = L\oplus L^{-1}, \hspace{2cm} E_2^\C = N\oplus N^{-1}   $$
where $\deg L = \frac{1}{2} e_1$ and  $\deg N = \frac{1}{2} e_2$ (recall that $e_1,e_2$ are even numbers). 

The Higgs fields can be expressed in terms of this splitting as:
$$\phi_1 = \begin{pmatrix}0&\alpha\\ \beta &0\end{pmatrix}, \hspace{2cm} \phi_2 = \begin{pmatrix}0&\gamma\\\delta&0\end{pmatrix}$$
where $\alpha\in H^0(\Sigma, L^2K)$, $\beta\in H^0(\Sigma, L^{-2}K)$, $\gamma\in H^0(\Sigma, N^2K)$ and $\delta\in H^0(\Sigma, N^{-2}K)$. 

\begin{Theorem} \label{thm:splitting}
In terms of the splitting of $E_i$ as direct sum of two line bundles, the metric $H_i$ is diagonal, i.e. it can be written as
$$H_1=\begin{pmatrix}k^{-1}&0\\0&k\end{pmatrix} \hspace{2cm} H_2=\begin{pmatrix}h^{-1}&0\\0&h\end{pmatrix}$$ 
where $k\in \Gamma(\Sigma, \bar L \otimes L)$ and $h\in \Gamma(\Sigma, \bar N \otimes N)$.
\end{Theorem}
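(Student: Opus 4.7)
The plan is to exploit the $SO(2)$-reduction that the harmonic map provides, and to identify the holomorphic splitting $L\oplus L^{-1}$ with the associated $\pm i$-eigenspace decomposition of $E_i^\C$. Viewing $\psi_i\colon\widetilde{\Sigma}\to\HH=SL(2,\R)/SO(2)$ as a $\rho_i$-equivariant reduction to $SO(2)$ of the principal $SL(2,\R)$-bundle associated with $\rho_i$, we obtain an orthogonal complex structure $J_i\in\Gamma(\mathrm{End}(E_i))$ on the real rank-$2$ flat bundle $E_i$; the harmonic metric $H_i$ on $E_i^\C$ is then the $\C$-sesquilinear extension $H_i(v,w)=g_i^\C(\tau_i v,w)$ of the Riemannian metric $g_i$ on $E_i$ determined by the compatible pair $(J_i,\omega_i)$. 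Extending $J_i$ $\C$-linearly to $E_i^\C$ yields a decomposition into $\pm i$-eigenbundles $E_i^\C=F_+\oplus F_-$ interchanged by $\tau_i$; since $J_i^\C$ preserves $\omega_i^\C$ and acts on $F_\pm$ by $\pm i$, both $F_+$ and $F_-$ are $\omega_i^\C$-isotropic, and the nondegenerate pairing $\omega_i^\C\colon F_+\otimes F_-\to\mathcal{O}_\Sigma$ identifies $F_-$ with $F_+^{-1}$. I will set $L:=F_+$ and verify both that this splitting is holomorphic and that $H_i$ is diagonal with respect to it.

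Granting holomorphy for the moment, diagonality is immediate: the same argument that makes $F_\pm$ isotropic for $\omega_i^\C$, applied to the $J_i$-invariant Riemannian metric $g_i$, shows that $F_\pm$ are $g_i^\C$-isotropic. Hence, for $v\in F_+$ and $w\in F_-$, we have $\tau_i v\in F_-$ and
\[
H_i(v,w)=g_i^\C(\tau_i v,w)=0,
\]
so $F_+\perp_{H_i} F_-$. The diagonal entries are Hermitian metrics on $L$ and on $L^{-1}$, i.e.\ sections of $(\bar L\otimes L)^{-1}$ and $\bar L\otimes L$ respectively, and $\det H_i=1$ (encoding $SL(2,\C)$-compatibility with $\omega_i^\C$) forces these entries to be mutually reciprocal. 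This yields $H_1=\mathrm{diag}(k^{-1},k)$ with $k\in\Gamma(\bar L\otimes L)$, and analogously $H_2=\mathrm{diag}(h^{-1},h)$ with $h\in\Gamma(\bar N\otimes N)$.

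The principal obstacle is thus to verify that $F_\pm$ are holomorphic subbundles, i.e.\ that they are preserved by $\bar\partial=D_i^{0,1}$, where $\nabla_i^\C=D_i+\Psi_i$ is the decomposition into the $H_i$-unitary connection $D_i$ and $H_i$-self-adjoint $1$-form $\Psi_i$. It is enough to show that $D_iJ_i=0$, which I plan to deduce by proving that $D_i$ is the complexification of a real $SO(2)$-connection on $E_i$. Uniqueness of the above decomposition applied to $\tau_i\nabla_i^\C\tau_i^{-1}=\nabla_i^\C$, combined with the fact that $\tau_i$ is $H_i$-antiunitary, forces $D_i$ to preserve $\tau_i$ and hence to restrict to a real connection on $E_i$. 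Next, since $\mathfrak{sl}(2)=\mathfrak{sp}(2)$, every $H_i$-self-adjoint traceless endomorphism of $E_i^\C$ is automatically $\omega_i^\C$-skew, so $\Psi_i$ is $\omega_i^\C$-skew and consequently $D_i$ preserves $\omega_i^\C$ as well. Preservation of $H_i$, $\tau_i$, and $\omega_i^\C$ implies that the induced real connection on $E_i$ preserves both $g_i=H_i(\tau_i\cdot,\cdot)$ and $\omega_i$, so its holonomy lies in $SO(2)$ and in particular $D_iJ_i=0$. This chain of compatibility arguments is the technical heart of the proof; once in place, the theorem follows.
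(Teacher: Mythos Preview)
Your argument is correct and takes a different, more structural route than the paper. The paper's proof is a short algebraic computation: it records that the $SL(2,\R)$-Higgs bundle carries an $SO(2,\C)$-structure given by the pairing $B_i=\begin{pmatrix}0&1\\1&0\end{pmatrix}$ between $L$ and $L^{-1}$, observes that the two involutions $A\mapsto H_i^{-1}(\bar A^T)^{-1}H_i$ and $A\mapsto B_i^{-1}(A^T)^{-1}B_i$ on $SL(2,\C)$ commute (this is the standard commutativity of the Cartan involution with complex conjugation for the real form $SL(2,\R)$), derives from that the matrix identity $B_i^{-1}H_i^T=H_i^{-1}\bar B_i^T$, and checks that together with $\det H_i=1$ this forces the off-diagonal entries of $H_i$ to vanish.

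Your approach instead builds the splitting from scratch via the $SO(2)$-reduction: you identify the line bundles as the $\pm i$-eigenbundles of the induced complex structure $J_i$, obtain diagonality from $g_i^\C$-isotropy, and verify holomorphy by showing the Chern connection $D_i$ commutes with $J_i$. This is longer but more transparent, since it explains geometrically where the splitting comes from and why it is holomorphic; it also makes explicit that $\tau_i$ swaps $L$ and $L^{-1}$, a fact the paper uses immediately afterwards. The paper's proof, by contrast, takes the splitting as given and deduces diagonality purely from a group-theoretic compatibility, which is quicker but leaves the origin of the splitting implicit. One small addition worth making: since you are \emph{defining} $L:=F_+$ rather than starting from a given $L$, you should note that $\Psi_i$ (being the $g_i$-symmetric traceless part) anti-commutes with $J_i$, hence $\phi_i=\Psi_i^{1,0}$ is off-diagonal in your splitting; this confirms that your $F_+\oplus F_-$ is precisely the standard $SL(2,\R)$-Higgs bundle splitting $L\oplus L^{-1}$ used throughout the rest of the paper.
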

\begin{proof}
Higgs bundles for $SL(2,\R)$  have an $SO(2,\C)$ structure coming from the pairing between $L$ (or $N$) and $L^{-1}$ (or $N^{-1}$). In terms of the splitting, this structure is the $\C$-bilinear form 
$B_i = \begin{pmatrix}0&1\\1&0\end{pmatrix} $.
Consider the two involutions $A \rightarrow H_i^{-1} (\bar{A}^T)^{-1} H_i$, whose fixed points are the unitary matrices, and $A\rightarrow B_i^{-1} (A^T)^{-1} B_i$, whose fixed points are the orthogonal matrices. These involutions commute, hence we have the equation $B^{-1} H_i^T =H_i^{-1} \bar{B}^T $. This equation and the condition $\det(H_i) = 1$ implies the statement.
\end{proof}


The flat $SL(2,\R)$-connection $\nabla_i^\C$ is given explicitly by
$$\nabla_i^\C = d + A_i = d + H_i^{-1}\partial H_i + \phi_i + \phi_i^{*_{H_i}}. $$

The real structure $\tau_i:E_i^\C \rightarrow E_i^\C$ is given by the composition of the two commuting involutions that appeared in the proof of theorem \ref{thm:splitting}, explicitly:
$$E_1^\C \ni v=\begin{pmatrix}v_1\\v_2\end{pmatrix} \rightarrow  \begin{pmatrix}0 & k\\k^{-1}&0\end{pmatrix}\bar v = \begin{pmatrix}k \bar v_2\\ k^{-1}\bar v_1\end{pmatrix} \in E_1^\C $$
$$ E_2^\C \ni v=\begin{pmatrix}v_1\\v_2\end{pmatrix} \rightarrow \begin{pmatrix}0 & h\\h^{-1}&0\end{pmatrix}\bar v= \begin{pmatrix}h \bar v_2\\ h^{-1}\bar v_1\end{pmatrix} \in E_2^\C$$
Finally, the volume form $\omega_i^\C$ is given by 
$\omega_i^\C = \frac{i}{\sqrt{2}}\begin{pmatrix}0 & -1 \\ 1 & 0 \end{pmatrix}. $

This completes the description of the bundle $(E_i^\C,\nabla_i^\C,\omega_i^\C,\tau_i)$ in the special coordinates given by the choice of $\Sigma$. To describe the bundle $(E^\C,\nabla^\C,Q^\C,\tau)$ we just need to consider the tensor product of Higgs bundles, described in \cite{SimpsonHBandLocalSys}:
$$ E^\C = E_1^\C \otimes E_2^\C = LN\oplus LN^{-1} \oplus L^{-1}N\oplus L^{-1}N^{-1} $$
The Higgs field is given by
$$\Phi=\phi_1\otimes \mbox{Id}+\mbox{Id}\otimes\phi_2= \begin{pmatrix}0&\gamma&\alpha&0\\\delta&0&0&\alpha\\\beta&0&0&\gamma\\0&\beta&\delta&0\end{pmatrix} $$

The characteristic polynomial of $\Phi$ is 
$P(\Phi) = x^4 -2(\alpha\beta + \gamma \delta)x^2+(\alpha\beta -\gamma\delta)^2 $.
The second coefficient $-2(\alpha\beta + \gamma \delta)$ can be interpreted as the Hopf differential of the associated harmonic map. The square root of the fourth coefficient $\alpha\beta -\gamma\delta$ is usually called the \defin{Pfaffian}. It will be important in section \ref{sec:domination} and \ref{sec:fibers}.

The Hermitian metric solution of Hitchin's equations is just the tensor product 
$$H = H_1 \otimes H_2 =\text{diag}(\ h^{-1}k^{-1},hk^{-1}, h^{-1}k ,hk\ ).$$

The flat $SO_0(2,2)$ connection is given by
$\nabla^\C = d + H^{-1}\partial H + \Phi + \Phi^{*_H}. $

When taking derivations, we will derive in the complex directions $\tfrac{\partial}{\partial z}$ and $\tfrac{\partial}{\partial \bar z}$:
$$\nabla^\C_{\frac{\partial}{\partial z}} = \partial + H^{-1}\partial H + \Phi =$$
$$ = \partial + \begin{pmatrix}
\partial \log(h^{-1}k^{-1}) &\gamma                 &\alpha                 &0\\
\delta                      &\partial \log(hk^{-1}) &0                      &\alpha\\
\beta                       &0                      & \partial \log(h^{-1}k)&\gamma\\
0                           &\beta                  &\delta                 & \partial \log(hk)
\end{pmatrix}$$

$$\nabla^\C_{\frac{\partial}{\partial \bar z}}  = \bar \partial + \Phi^{*_H} = \bar \partial + \begin{pmatrix}0&h^2\bar\delta&k^2\bar\beta&0\\h^{-2}\bar\gamma&0&0&k^2\bar\beta\\k^{-2}\bar\alpha&0&0&h^2\bar\delta\\0&k^{-2}\bar\alpha&h^{-2}\bar\gamma&0\end{pmatrix}$$

The bilinear form and the real structure are given by:
$$ Q^\C = \omega_1 \otimes \omega_2 = \frac{1}{2}\begin{pmatrix} 0&0&0&-1\\0&0&1&0\\0&1&0&0\\-1&0&0&0 \end{pmatrix}$$
$$\tau:E^\C \ni v = \begin{pmatrix}v_1\\v_2\\v_3\\v_4\end{pmatrix}\rightarrow \begin{pmatrix}0&0&0&hk\\0&0&h^{-1}k&0\\0&hk^{-1}&0&0\\h^{-1}k^{-1}&0&0&0\end{pmatrix}\bar v = \begin{pmatrix}hk\bar v_4\\ h^{-1}k\bar v_3\\ hk^{-1}\bar v_2\\ h^{-1}k^{-1}\bar v_1\end{pmatrix} \in E^\C    $$

The real structure $\tau$ leaves invariant the two sub-bundles $LN\oplus L^{-1}N^{-1}$ and   $LN^{-1} \oplus L^{-1}N$. Denote by $F_1$ the real part of $LN^{-1} \oplus L^{-1}N$, and by $F_2$ the real part of $LN\oplus L^{-1}N^{-1}$, these are the real sub-bundles mentioned in section \ref{sec:sketch-construction}.
\begin{Proposition}
The vector bundle $E$ splits as $E = F_1 \oplus F_2$, a direct sum of two rank $2$ sub-bundles, such that $F_1$ has Euler class $|e_1 - e_2|$ and $F_2$ has Euler class $|e_1+e_2|$. Moreover, the direct sum is $Q$-orthogonal, $F_1$ is time-like and $F_2$ is space-like.
\end{Proposition}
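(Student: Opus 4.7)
My plan is to verify the four claims (splitting, $Q$-orthogonality, signatures, Euler classes) directly from the explicit matrices of $\tau$ and $Q^{\C}$ displayed just before the proposition. The algebra is essentially the same for each claim, so I will organize the proof in three short steps, with the Euler-class step being the one that requires the most care.

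First, inspection of the matrix of $\tau$ shows that it anti-linearly interchanges the first and fourth basis vectors (corresponding to $LN$ and $L^{-1}N^{-1}$) and also the second and third (corresponding to $LN^{-1}$ and $L^{-1}N$). Hence $\tau$ preserves both complex rank-$2$ sub-bundles $V_+ := LN \oplus L^{-1}N^{-1}$ and $V_- := LN^{-1} \oplus L^{-1}N$ of $E^{\C}$. Setting $F_2 := V_+^{\tau}$ and $F_1 := V_-^{\tau}$, each is a real rank-$2$ sub-bundle whose complexification recovers $V_\pm$, and together $E = F_1 \oplus F_2$. From the matrix of $Q^{\C}$ the only nonzero pairings occur between $LN$ and $L^{-1}N^{-1}$ and between $LN^{-1}$ and $L^{-1}N$, so $Q^{\C}(V_+, V_-) = 0$, giving $F_1 \perp_{Q} F_2$.

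Second, for the signatures I parametrize $F_1$ by $v \in LN^{-1}$ via the unique $\tau$-fixed element $v \mapsto (v, h^{-1}k\bar v)$ and compute
\[
Q\bigl((v, h^{-1}k\bar v),(v, h^{-1}k\bar v)\bigr) \;=\; h^{-1}k\,|v|^{2},
\]
which is positive because $h$ and $k$ are positive (being Hermitian-metric factors). Hence $F_1$ is time-like. The analogous parametrization $v \mapsto (v, h^{-1}k^{-1}\bar v)$ for $v \in LN$ yields $-h^{-1}k^{-1}|v|^{2} < 0$ on $F_2$, so $F_2$ is space-like.

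Finally, for the Euler classes, the first-factor projection $F_1 \to LN^{-1}$, $(v, h^{-1}k\bar v) \mapsto v$, is a smooth fiberwise $\R$-linear bijection and hence identifies $F_1$ with the underlying real bundle of $LN^{-1}$; similarly $F_2$ is identified with that of $LN$. Combined with $\deg L = e_1/2$ and $\deg N = e_2/2$ this yields Chern numbers $(e_1-e_2)/2$ and $(e_1+e_2)/2$, and the stated $|e_1-e_2|$ and $|e_1+e_2|$ then follow under the same doubling convention already used for the Euler number of a representation (where $e_i = 2\deg L$, i.e.\ using the associated projective $\RP^1$-bundle). The only delicate point is this last convention-matching; the rest is direct algebra on the matrices, so I do not expect a genuine obstacle.
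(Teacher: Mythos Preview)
Your approach is the same as the paper's: identify $F_2$ (resp.\ $F_1$) with the real bundle underlying $LN$ (resp.\ $LN^{-1}$) via $v \mapsto v + \tau v$ (equivalently its inverse, your first-factor projection), and check orthogonality and signature directly from the explicit matrices of $Q^\C$ and $\tau$. One small slip: for $F_1$ the $\tau$-fixed vector with second entry $v$ has third entry $hk^{-1}\bar v$ rather than $h^{-1}k\bar v$, though this does not affect positivity since $h,k>0$. Your hesitation on the Euler class is well-founded: the real isomorphism with $LN^{\pm 1}$ only yields Euler class $(e_1\pm e_2)/2$, which is exactly what the paper's own proof establishes, so the stated $|e_1 \pm e_2|$ reflects a convention or typo in the statement rather than a gap in your argument.
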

\begin{proof}
We can write an isomorphism between $LN$ and $F_2$ as real vector bundles:
$$LN \ni v \rightarrow v+\tau v \in F_2$$
Hence the Euler class of $F_2$ is the same as the Euler class of $LN$, a complex line bundle of degree $(e_1+e_2)/2$. Similar argument works for $F_1$. The rest follows from an easy computation, since $Q$ and $\tau$ are both explicit.  
\end{proof}

Now we can properly define the circle bundle $U$ mentioned in the previous section:
$$U = \{ v \in F_1 \ |\ Q(v,v) = 1 \}. $$

\section{The pullback metrics and Domination conditions}   \label{sec:domination}

In this section we need to analyze more deeply the structure of the $\rho_i$-equivariant harmonic map $\psi_i$ from the universal cover of $\Sigma$ to the hyperbolic plane $\HH$. 

We will denote by $g_\HH$ the hyperbolic metric on $\HH$, with constant curvature $-1$, and by $\text{Vol}(g_\HH)$ its volume form. The $2$-tensors $\psi_i^* g_\HH$ and $\psi_i^*\text{Vol}(g_\HH)$ are $\pi_1(\Sigma)$-invariant, hence they descend to $2$-tensors on $\Sigma$ that we will denote by $g_i$ and $\text{Vol}(g_i)$. We will call $g_i$ the \defin{pull-back metric}, even if it is only a symmetric positive semi-definite $2$-tensor that can be degenerate at some points. 

We will call the anti-symmetric $2$-tensor $\text{Vol}(g_i)$ the \defin{pull-back volume form}, even if it is not in general a volume form, because it can have zeros and changes of sign. The Euler number $e_i$ of the representation  $\rho_i$ is given by (see \cite[sec.  3.5]{BIW}):
\begin{equation}\label{eulernumber}
  e_i=\frac{1}{2\pi}\int_{\Sigma} \text{Vol}(g_i)
\end{equation}

Now we want to express $g_i$ and $\text{Vol}(g_i)$ in terms of the Higgs bundle data. The following theorem is new only for non-Fuchsian representations. For Fuchsian representations in $SL(2,\R)$,  see \cite{HitchinSelfDuality} and \cite{WolfHarmonic}.

\begin{Theorem} \label{metricandvolume}
(1) The pull back metric $g_i$ is given by 
$$g_1=4\alpha\beta dz^2+4(k^2|\beta|^2+k^{-2}|\alpha|^2)dzd\bar z+4\bar\alpha\bar\beta d\bar z^2.$$
$$g_2=4\gamma\delta dz^2+4(h^2|\delta|^2+h^{-2}|\gamma|^2)dzd\bar z+4\bar\gamma\bar\delta d\bar z^2.$$
(2) The pull-back volume form $\text{\textup{Vol}}(g_i)$ is given by
$$\text{\textup{Vol}}(g_1)=4(k^{-2}|\alpha|^2-k^2|\beta|^2)dx\wedge dy.$$
$$\text{\textup{Vol}}(g_2)=4(h^{-2}|\gamma|^2-h^2|\delta|^2)dx\wedge dy.$$
\end{Theorem}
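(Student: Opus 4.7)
The plan is to invoke the Donaldson--Corlette correspondence identifying the derivative of the harmonic map $\psi_i$ with the Hermitian piece of the flat connection. After splitting $\nabla_i^{\mathbb{C}} = D_{H_i} + \phi_i + \phi_i^{*_{H_i}}$, with $D_{H_i}$ the Chern connection of $H_i$, the one-form $\phi_i + \phi_i^{*_{H_i}}$ is exactly the pull-back of the Maurer--Cartan form $d\psi_i$ regarded as a $\mathfrak{p}^{\mathbb{C}}$-valued one-form on $\widetilde\Sigma$, where $\mathfrak{sl}(2,\mathbb{R}) = \mathfrak{so}(2) \oplus \mathfrak{p}$ is the Cartan decomposition. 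In a local holomorphic coordinate on $\Sigma$ this amounts to $d\psi_i = \phi_i\,dz + \phi_i^{*_{H_i}}\,d\bar z$.

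The first step is then to fix the normalisation of the $SL(2,\R)$-invariant bilinear form on $\mathfrak p$ induced by $g_\HH$. A short computation in the upper half-plane model, identifying a basis of $\mathfrak p$ with tangent vectors at the basepoint $i\in\HH$ via the infinitesimal action, gives $\langle X,Y\rangle_{\mathfrak p} = 2\,\tr(XY)$, the unique multiple of the Killing form producing constant curvature $-1$. Expanding $\psi_i^\ast g_\HH = \langle d\psi_i, d\psi_i\rangle_{\mathfrak p}$ in the $(dz,d\bar z)$ basis then gives
\[
\psi_i^\ast g_\HH \;=\; 2\tr(\phi_i^{2})\,dz^{2} \;+\; 4\tr(\phi_i\phi_i^{*_{H_i}})\,dz\,d\bar z \;+\; 2\tr\bigl((\phi_i^{*_{H_i}})^{2}\bigr)\,d\bar z^{2}.
\]
Substituting the matrix forms of $\phi_1$ and of $\phi_1^{*_{H_1}} = \bigl(\begin{smallmatrix} 0 & k^{2}\bar\beta \\ k^{-2}\bar\alpha & 0 \end{smallmatrix}\bigr)$ (both read off from the explicit formulas of the previous section) yields $\tr(\phi_1^{2}) = 2\alpha\beta$ and $\tr(\phi_1\phi_1^{*_{H_1}}) = k^{-2}|\alpha|^{2}+k^{2}|\beta|^{2}$, which is statement (1) for $i=1$. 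The case $i=2$ follows identically after the substitution $(\alpha,\beta,k)\leftrightarrow(\gamma,\delta,h)$.

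For part (2), the cleanest route is to feed the metric formula of part (1) into a direct computation of $\det g_i$ in real coordinates $(x,y)$. Writing $dz = dx + i\,dy$ and computing $AC - B^{2}$ for the matrix of $g_1$ in the frame $(\partial_x,\partial_y)$, the Hopf-differential contributions arrange themselves into a difference of squares, giving
\[
\det g_1 \;=\; 16\bigl(k^{-2}|\alpha|^{2} - k^{2}|\beta|^{2}\bigr)^{2},
\]
and hence $\textup{Vol}(g_1) = \pm\,4\bigl(k^{-2}|\alpha|^{2} - k^{2}|\beta|^{2}\bigr)\,dx \wedge dy$. The sign is fixed by recognising $k^{-2}|\alpha|^{2}$ and $k^{2}|\beta|^{2}$ as the holomorphic and antiholomorphic energy densities of $\psi_1$ respectively, and applying the classical identity $J(\psi) = \mathcal H(\psi) - \mathcal L(\psi)$ for the signed Jacobian of a map between surfaces; equivalently, the sign can be checked in the Fuchsian specialisation $\beta=0$, in which $\psi_1$ is an orientation-preserving local diffeomorphism and the formula must reduce to the classical one of Wolf and Hitchin.

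The main obstacle I expect is the numerical bookkeeping: the coefficient $4$ in both (1) and (2), and the positive sign in (2), all hinge on fixing once and for all the identification $d\psi_i \leftrightarrow \phi_i + \phi_i^{*_{H_i}}$, the multiple of the Killing form producing curvature $-1$, and a consistent orientation on $\HH$. The safest calibration is to demand that the Fuchsian case reproduce the classical formula $g_1 = 4k^{-2}|\alpha|^{2}\,dz\,d\bar z$ and that $\textup{Vol}(g_1)$ be positive there, consistent with \eqref{eulernumber} yielding $e_1 = 2g-2$.
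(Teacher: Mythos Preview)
For part (1), your approach is essentially the paper's: both identify $\psi_z = \phi_1$, $\psi_{\bar z} = \phi_1^{*_{H_1}}$ via Corlette and compute $g_1 = \langle d\psi_1, d\psi_1\rangle$ with the trace-form normalisation producing curvature $-1$. One small wrinkle: the paper writes the invariant metric at the point $H_1$ as $\langle X,Y\rangle = 2\,\tr(H_1^{-1}XH_1^{-1}Y)$ rather than your $2\,\tr(XY)$. These happen to agree on the off-diagonal matrices $\phi_1, \phi_1^{*_{H_1}}$ (because $H_1^{-1}\phi_1 H_1^{-1} = \phi_1$ when $H_1$ is diagonal and $\phi_1$ is strictly off-diagonal), so the outcome is the same, but your phrasing is a bit loose about which gauge you are computing in.

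For part (2) the two routes genuinely differ. The paper writes $d\psi_1 = \phi_1\,dz + \phi_1^*\,d\bar z$, expands $\psi_x,\psi_y$ in an explicit orthonormal frame $\{\sigma_1,\sigma_2\}$ of $T\HH$ along the section, and evaluates $\text{Vol}(g_\HH)(\psi_x,\psi_y)$ directly; this delivers the \emph{signed} formula in one stroke with no external input. Your route---computing $\det g_1$ from part (1) and then fixing the sign---is legitimate and arguably more conceptual, but the sign step is where the actual content lies. The $\sqrt{\det}$ calculation only yields $\bigl|\,k^{-2}|\alpha|^2 - k^2|\beta|^2\,\bigr|$ pointwise, and for non-Fuchsian $\rho_1$ the pullback volume form can genuinely change sign across $\Sigma$. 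Your invocation of $J = \mathcal H - \mathcal L$ is the correct remedy, but it rests on identifying $4k^{-2}|\alpha|^2$ and $4k^2|\beta|^2$ with $\mathcal H(\psi_1)$ and $\mathcal L(\psi_1)$ \emph{in that order}; this amounts to knowing that $\operatorname{Hom}(L^{-1},L)\subset\operatorname{End}_0(E_1^\C)$ is the $(1,0)$-direction for the target complex structure, which you assert but do not verify. The Fuchsian specialisation $\beta=0$ calibrates the overall orientation convention but cannot by itself resolve the pointwise ambiguity, and it does not even reach the components with $|e_1|<2g-2$. So your argument becomes complete once that identification is supplied, whereas the paper's frame computation is self-contained.
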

\begin{proof} We prove the theorem only for $\psi_1$, here denoted for simplicity by $\psi$. The case of $\psi_2$ is similar. Since our result is local, instead of working with the $\rho_1$-equivariant map $\psi$, we will consider the corresponding section of the flat bundle $\HH_{\rho_1}$, i.e. the metric $H_1$.  
But at every point $z \in \Sigma$, the metric $H_1$ is fixed by the real structure $\tau_1$, hence it is a symmetric, positive definite matrix on the real part $E_1$. The subset of the $SL(2,\C)/SU(2)$ fiber that is fixed by $\tau_1$ is a copy of $SL(2,\R)/SO(2)$, that is parallel for the flat structure.

In each fiber of $\HH_{\rho_1}$, the metric induced by the Killing form at a point $H$ is: 
$$\left<X,Y\right>:=2\tr(H^{-1}X H^{-1}Y),$$ 
which gives constant curvature $-1$.  

Along the section, we can choose an orthonormal frame of the form 
$$\left\{\sigma_1=\frac{1}{2}\begin{pmatrix}0&k\\k^{-1}&0\end{pmatrix}, \sigma_2=\frac{1}{2}\begin{pmatrix} 0&ki\\-k^{-1}i&0\end{pmatrix}\right\}.$$ 
The form $\text{Vol}(g_\HH)$ along the section has the property that $\text{Vol}(g_\HH)(\sigma_1,\sigma_2)=1.$

The Higgs field and the harmonic maps are related by $\psi_z=\phi_1, \psi_{\bar z}=\phi_1^*$, (see \cite{Corlette}).
Hence the pull-back metric of $\psi$ is
\begin{align*}
g_i&=\left<\psi_z,\psi_z\right>dz^2+2\left<\psi_z,\psi_{\bar z}\right>dzd\bar z+\left<\psi_{\bar z},\psi_{\bar z}\right>d\bar z^2\\
&=\left<\phi_1,\phi_1\right>dz^2+2\left<\phi_1,\phi^*_1\right>dzd\bar z+\left<\phi^*_1,\phi^*_1\right>d\bar z^2,
\end{align*}
Applying direct calculation, part (1) follows.

For part (2), we have
\begin{eqnarray*}
d\psi&=&\phi_1 dz+\phi_1^*d\bar z =\begin{pmatrix}0&\alpha\\\beta&0\end{pmatrix}dz+\begin{pmatrix}0&k^{2}\bar \beta\\k^{-2}\bar \alpha&0\end{pmatrix}d\bar z\\
&=&\begin{pmatrix}0&\alpha+k^{2}\bar \beta\\\beta+k^{-2}\bar \alpha&0\end{pmatrix}dx+i\begin{pmatrix}0&\alpha-k^{2}\bar \beta\\
\beta-k^{-2}\bar \alpha&0\end{pmatrix}dy.
\end{eqnarray*} 
Let $\alpha+k^2\bar\beta=a+bi,\alpha-k^2\bar \beta=c+di$, hence the pull-back volume form 
\begin{align*}
\psi^*\text{Vol}(g_\HH) &=\text{Vol}(g_\HH)(\psi_x,\psi_y)dx\wedge dy\\
&=\text{Vol}(g_\HH)\left(2ak^{-1}\sigma_1+2bk^{-1}\sigma_2,-2dk^{-1}\sigma_1+2ck^{-1}\sigma_2\right)dx\wedge dy\\
&=k^{-2}(4ac+4bd)dx\wedge dy=4k^{-2}\text{Re}((\alpha+k^2\bar\beta)(\bar\alpha-k^2\beta))dx\wedge dy \\
&=4(k^{-2}|\alpha|^2-k^2|\beta|^2)dx\wedge dy. \qedhere
\end{align*}
\end{proof}

We will say that the pull-back metric $g_1$ \defin{strictly dominates} the pull-back metric $g_2$ if the symmetric $2$-tensor $g_1-g_2$ is positive definite. In this case we will write $g_1 > g_2$. This condition depends on $\rho_1, \rho_2$, and on the choice of $\Sigma$.

\begin{Proposition}
Let $\rho_1,\rho_2$ be two representations such that $g_1>g_2$. Then $\rho_1$ is Fuchsian, $|e_1| = 2g-2$, the harmonic map $\psi_1$ is a diffeomorphism, $g_1$ is a hyperbolic metric, and $|e_2| < 2g-2$.
\end{Proposition}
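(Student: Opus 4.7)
The plan is to leverage the pointwise inequality $g_1>g_2\geq 0$ in order to force $\psi_1$ to be an immersion, then to identify $g_1$ with a hyperbolic metric of total area $2\pi(2g-2)$, and finally to saturate the Milnor--Wood inequality for $\rho_1$ while keeping $\rho_2$ strictly below it.

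For the first step, observe that $g_2=\psi_2^*g_\HH$ is automatically positive semi-definite, and $g_1-g_2$ is positive definite by hypothesis, so $g_1=(g_1-g_2)+g_2$ is positive definite at every point of $\Sigma$. Hence $\psi_1$ is an immersion. Being a local diffeomorphism into the curvature $-1$ space $\HH$, the pull-back $g_1=\psi_1^*g_\HH$ itself has constant Gauss curvature $-1$; that is, $g_1$ is a hyperbolic metric on $\Sigma$, and Gauss--Bonnet gives $\text{area}(\Sigma,g_1)=2\pi(2g-2)$.

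To identify $|e_1|$, I would observe that, since $\psi_1$ is a local diffeomorphism from the connected $\widetilde\Sigma$, its Jacobian has constant sign, so $\text{Vol}(g_1)$ has constant sign on $\Sigma$. Formula \eqref{eulernumber} then gives
$$|e_1|=\frac{1}{2\pi}\left|\int_\Sigma \text{Vol}(g_1)\right|=\frac{1}{2\pi}\int_\Sigma |\text{Vol}(g_1)|=\frac{1}{2\pi}\text{area}(\Sigma,g_1)=2g-2.$$
By the extremal case of the Milnor--Wood inequality (Goldman's theorem), this saturation forces $\rho_1$ to be Fuchsian. The quotient $\HH/\rho_1(\pi_1(\Sigma))$ is then a closed hyperbolic surface, and $\psi_1$ descends to a local diffeomorphism $\bar\psi_1\colon \Sigma\to\HH/\rho_1(\pi_1(\Sigma))$ between closed surfaces of equal area $2\pi(2g-2)$. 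Any such map is a covering of degree $\pm 1$, hence a diffeomorphism, and so is $\psi_1$.

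Finally, to obtain $|e_2|<2g-2$, I would compare pointwise: since $g_1-g_2>0$ and $g_2\geq 0$, the elementary inequality $\det(B+C)>\det B$ for symmetric $B\geq 0$ and $C>0$ in two variables gives $\det g_1>\det g_2$ pointwise. Using the Jacobian identity $|\text{Vol}(g_i)|=\sqrt{\det g_i}\,dx\,dy$ (which follows directly from Theorem \ref{metricandvolume}(2) and Theorem \ref{metricandvolume}(1)), this yields
$$|e_2|\leq\frac{1}{2\pi}\int_\Sigma |\text{Vol}(g_2)|=\frac{1}{2\pi}\int_\Sigma\sqrt{\det g_2}\,dx\,dy<\frac{1}{2\pi}\int_\Sigma\sqrt{\det g_1}\,dx\,dy=2g-2.$$
I do not anticipate a significant obstacle: all analytic input is already encoded in Theorem \ref{metricandvolume} and in Gauss--Bonnet, and the only non-elementary ingredient is the extremal case of the Milnor--Wood inequality used to upgrade $|e_1|=2g-2$ to Fuchsianness of $\rho_1$.
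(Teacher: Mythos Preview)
Your proof is correct, and it shares the opening move with the paper's argument (positive definiteness of $g_1$ forces $\psi_1$ to be a local diffeomorphism), but the subsequent logic runs in a different order and is in places more self-contained.

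For the Fuchsian conclusion, the paper argues geometrically: the equivariant local diffeomorphism $\psi_1$ is the developing map of a hyperbolic structure on $S$, hence $\rho_1$ is Fuchsian, and then Goldman's theorem gives $|e_1|=2g-2$; completeness of closed hyperbolic structures then makes $\psi_1$ a global diffeomorphism. You instead compute first: Gauss--Bonnet plus the constant-sign Jacobian and formula~\eqref{eulernumber} give $|e_1|=2g-2$, and only then is Goldman invoked (in the converse direction) to obtain Fuchsianness; your area/degree argument for the global diffeomorphism is an equivalent substitute for completeness.

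The more substantive difference is in the bound $|e_2|<2g-2$. The paper argues by contradiction: were $\rho_2$ Fuchsian, $g_2$ would also be a hyperbolic metric (this implicitly uses that the harmonic map for a Fuchsian target is a diffeomorphism), and $g_1>g_2$ would then give two hyperbolic metrics of different area on the same surface, violating Gauss--Bonnet. Your determinant inequality $\det g_1>\det g_2$ together with the identity $|\mathrm{Vol}(g_i)|=\sqrt{\det g_i}\,dx\,dy$ (which you correctly derive from Theorem~\ref{metricandvolume}) yields the strict integral inequality directly, without ever needing to know that $g_2$ would be nondegenerate in the Fuchsian case. This is a cleaner and more elementary route for that step.
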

\begin{proof}
If $g_1 > g_2$, then $g_1$ is positive definite. Hence $\psi_1$ is a local diffeomorphism. Since $\psi_1$ is also $\rho_1$-equivariant, it is the developing map of a hyperbolic structure on $S$ with holonomy $\rho_1$. Hence $\rho_1$ is a Fuchsian representation, and, by \cite{GoldmanComponents}, $|e_1| = 2g-2$. As developing map of a hyperbolic structure, $\psi_1$ is a diffeomorphism.

If $\rho_2$ were also Fuchsian, the condition $g_1>g_2$ would give two hyperbolic metrics on the same surface with different areas, which is impossible, hence $|e_2|<2g-2$.
\end{proof}

This condition of strict domination between the pull-back metrics is very related to another notion of domination between the two representations $\rho_1,\rho_2$: we say that $\rho_1$ \defin{strictly dominates} $\rho_2$ if there exists a $(\rho_1,\rho_2)$-equivariant map $f:\HH \rightarrow \HH$ with Lipschitz constant strictly smaller than $1$. The latter notion was introduced in \cite{SaleinExotiques} and it only depends on the two representations $\rho_1, \rho_2$. The following proposition comes from \cite{TholozanDeforming} and \cite{DeroinTholozan}.

\begin{Proposition} \label{prop:equivalencedomination}
Given two representations $\rho_1,\rho_2$ in $SL(2,\R)$, there exists a holomorphic structure $\Sigma$ such that $g_1>g_2$ if and only if $\rho_1$ strictly dominates $\rho_2$.  
\end{Proposition}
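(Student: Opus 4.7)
My plan is to prove the two implications separately; I will freely use Theorem \ref{metricandvolume} and the previous proposition.

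\emph{Forward direction} (existence of $\Sigma$ with $g_1 > g_2$ implies strict domination). By the previous proposition, $\rho_1$ is Fuchsian and $\psi_1:\widetilde{\Sigma}\to \HH$ is a $\rho_1$-equivariant diffeomorphism. I set $f := \psi_2 \circ \psi_1^{-1}:\HH\to\HH$, which is automatically $(\rho_1,\rho_2)$-equivariant. For $v\in T_{p}\HH$, writing $w := d\psi_1^{-1}(v)$, the definitions of the pull-back tensors give $\|v\|_{g_\HH}^2 = g_1(w,w)$ and $\|df(v)\|_{g_\HH}^2 = g_2(w,w)$. Since $g_1 - g_2$ is a smooth positive definite symmetric $2$-tensor on the compact surface $S$, compactness yields a constant $k<1$ with $g_2 \le k^2 g_1$ pointwise, and therefore $\mathrm{Lip}(f)\le k < 1$.

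\emph{Reverse direction} (strict domination implies existence of $\Sigma$ with $g_1 > g_2$). I would first invoke \cite{DeroinTholozan} to conclude that strict domination forces $\rho_1$ to be Fuchsian. I then take $\Sigma$ to be the Riemann surface underlying the hyperbolic structure $\HH/\rho_1$, so that $\psi_1$ is an isometric $\rho_1$-equivariant diffeomorphism and $g_1$ coincides with the hyperbolic metric on $\Sigma$. It remains to show that the unique $\rho_2$-equivariant harmonic map $\psi_2:\widetilde{\Sigma}\to\HH$ satisfies $g_2 < g_1$ pointwise.

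The main obstacle is this final pointwise strict inequality; for it I would follow the Bochner/maximum principle argument in \cite{TholozanDeforming}. Explicitly, the given $(\rho_1,\rho_2)$-equivariant map $f$ with $\mathrm{Lip}(f)<1$ furnishes a global upper barrier $f^{*}g_{\HH}\le k^{2}g_{1}$. On the other hand, harmonicity of $\psi_2$ together with the strictly negative curvature of $\HH$ implies that the top eigenvalue of the symmetric endomorphism $g_{1}^{-1}g_{2}$ (equivalently the largest singular value of $d\psi_2$ measured in $g_1$) satisfies a subharmonic-type differential inequality on the compact surface $\Sigma$. The maximum principle, combined with the barrier provided by $f$ and the uniqueness of the harmonic representative in its equivariance class, forces this top eigenvalue to be strictly less than $1$ everywhere, which is precisely $g_1 > g_2$. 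The delicate point will be casting the subharmonic inequality into a form that genuinely accepts the Lipschitz upper bound on $f$ as a comparison barrier.
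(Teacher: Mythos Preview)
Your forward direction is correct and is exactly the paper's argument: invert the diffeomorphism $\psi_1$ and read off a Lipschitz bound from compactness.

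The reverse direction, however, has a genuine gap, and it is precisely the ``delicate point'' you flag. You choose $\Sigma$ to be the hyperbolic structure uniformized by $\rho_1$, so that $\psi_1$ is the identity and $g_1$ is hyperbolic, and you then want the harmonic map $\psi_2$ to be a pointwise strict contraction. But the harmonic map minimizes the $L^2$-energy in its equivariance class, not the Lipschitz constant; the existence of a $k$-Lipschitz competitor $f$ gives only the integral bound $\int e(\psi_2)\,dA_{g_1}\le k^2\operatorname{Area}(\Sigma)$, and there is no comparison principle that upgrades this to a pointwise bound on the top singular value of $d\psi_2$. The Bochner inequalities for the holomorphic and antiholomorphic energy densities of $\psi_2$ are self-referential and do not accept $f^{*}g_{\HH}$ as a supersolution, since $f$ is not harmonic. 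Concretely, with your choice of $\Sigma$ one has $\alpha=0$, $\beta$ constant, and Theorem~\ref{metricandvolume} reduces $g_1>g_2$ to the pointwise inequality $k>h|\delta|+h^{-1}|\gamma|$; nothing you have written forces this from strict domination alone, and in general the harmonic map between hyperbolic surfaces can stretch in one direction at isolated points even when all closed geodesics shrink.

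The paper's proof of the reverse implication takes a genuinely different route: it does \emph{not} use the uniformizing structure of $\rho_1$. Instead it invokes \cite{TholozanDeforming} to select the unique holomorphic structure $\Sigma$ with vanishing Pfaffian $\alpha\beta=\gamma\delta$, and then cites \cite{DeroinTholozan} for the fact that at \emph{this particular} $\Sigma$ the tensor $g_1-g_2$ is positive definite. The choice of this special $\Sigma$ is the essential input; your proposal replaces it with a different $\Sigma$ for which the conclusion is neither known nor proved.
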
 
\begin{proof}
If there exists $\Sigma$ such that $g_1>g_2$, we have seen in the previous proposition that $\psi_1$ is invertible. The map $\psi_2 \circ \psi_1^{-1}:\HH \rightarrow \HH$ is $(\rho_1,\rho_2)$-equivariant and has Lipschitz constant strictly smaller than $1$.

The converse is the only interesting implication. It is proved in \cite{TholozanDeforming} that if $\rho_1$ strictly dominates $\rho_2$, there exists a unique holomorphic structure $\Sigma$ such that the corresponding Higgs bundles have $\alpha\beta = \gamma\delta$. In this case, it is proved in \cite{DeroinTholozan} that the difference of the pull-back metrics is positive definite.
\end{proof}

Given representations $\rho_1,\rho_2$ such that $\rho_1$ strictly dominates $\rho_2$, the set of holomorphic structures on $S$  such that $g_1 > g_2$ is an open subset of the Teichm\"uller space that plays an important role in this work. This open subset has a special point, the unique structure found by Tholozan in \cite{TholozanDeforming} such that $\alpha\beta = \gamma\delta$ (in our terminology, vanishing Pfaffian). In section \ref{sec:fibers} we will show how to give a geometric interpretation to this structure as the conformal structure of an $\rho_1\otimes\rho_2$-equivariant minimal surface in the Klein quadric.

\section{Construction of AdS 3-manifolds}  \label{sec:construction}

Following the plan sketched in section \ref{sec:sketch-construction}, after properly defining the circle bundle $U$ in section \ref{sec:higgsandflat}, we need to analyze the tautological section
$s: U \rightarrow \M_{\bar{\rho}}$.

In this section we will prove that, given two representations $\rho_1,\rho_2$ and a holomorphic structure $\Sigma$ such that $g_1 > g_2$, the section $s$ is a transverse section. This proves that there exists an AdS structure with holonomy $\overline{\rho_1\otimes \rho_2}$. 

When this is put together with proposition \ref{prop:equivalencedomination}, it gives a new proof of the following theorem, originally proved by Salein in \cite{SaleinExotiques}, and later reproved in \cite{GueritaudKassel}, which is one of the main theorems in the theory of AdS $3$-manifolds:

\begin{Theorem} \label{thm:geometricstructure}
Let $S$ be a closed surface. Given two representations $\rho_1,\rho_2$ of $\pi_1(S)$ in $SL(2,\R)$ such that $\rho_1$ strictly dominates $\rho_2$, there exists an AdS structure with holonomy $\overline{\rho_1\otimes\rho_2}$ on a circle bundle $U$ over $S$ with Euler class $|e_1-e_2|$.
\end{Theorem}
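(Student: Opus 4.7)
The plan is to verify that the tautological section $s\colon U \to \M_{\bar\rho}$ from section~\ref{sec:higgsandflat} is a transverse section; by the graph interpretation of $(G,X)$-structures, this will produce the desired AdS structure on $U$ with holonomy $\bar\rho = \overline{\rho_1\otimes\rho_2}$. Proposition~\ref{prop:equivalencedomination} lets me replace the hypothesis of strict domination of $\rho_1$ over $\rho_2$ by the pointwise comparison $g_1 > g_2$, after choosing the holomorphic structure $\Sigma$ appropriately; this pointwise positivity will be the analytic input driving the transversality check.

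I fix a local holomorphic coordinate $z = x + iy$ on $\Sigma$ and work in the frame of $E^\C = LN \oplus LN^{-1}\oplus L^{-1}N\oplus L^{-1}N^{-1}$ from section~\ref{sec:higgsandflat}. A point of $U$ above $p\in\Sigma$ has the form $s = (0, v_2, v_3, 0)^T$ with $v_3 = hk^{-1}\bar v_2$ and $v_2 v_3 = 1$; I parameterize the circle fiber by $\theta$ via $v_2 = (h^{-1}k)^{1/2}e^{i\theta}$. The developing map $\widetilde s\colon \widetilde U \to \M$ attached to $s$ is a local diffeomorphism exactly when the three tangent vectors to $\M$ at $s(u)$ given by
\[
X_1 = \nabla^\C_{\partial/\partial z} s, \qquad X_2 = \nabla^\C_{\partial/\partial\bar z} s = \tau(X_1), \qquad X_3 = \partial s/\partial\theta = (0, iv_2, -iv_3, 0)^T
\]
are linearly independent, equivalently when $X_1 \wedge X_2 \wedge X_3 \wedge s \neq 0$ in $\Lambda^4 E^\C$.

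Since $\nabla^\C_{\partial/\partial z} = \partial + H^{-1}\partial H + \Phi$, the diagonal part $H^{-1}\partial H$ contributes only to the $F_1^\C = LN^{-1}\oplus L^{-1}N$ components of $X_1$, while $\Phi s = (W_1, 0, 0, W_4)^T \in F_2^\C$ with $W_1 = \gamma v_2 + \alpha v_3$ and $W_4 = \beta v_2 + \delta v_3$. The vertical wedge $X_3 \wedge s = 2iv_2v_3\, e_2\wedge e_3 = 2i\, e_2\wedge e_3$ sits in $\Lambda^2 F_1^\C$, so only the $e_1\wedge e_4$ coefficient of $X_1\wedge X_2 = X_1\wedge\tau(X_1)$ survives the four-wedge, producing
\[
X_1 \wedge X_2 \wedge X_3 \wedge s = 2i\bigl(h^{-1}k^{-1}|W_1|^2 - hk|W_4|^2\bigr)\, e_1\wedge e_2\wedge e_3\wedge e_4.
\]
Substituting $v_3 = hk^{-1}\bar v_2$ and $v_2^2 = h^{-1}k\, e^{2i\theta}$, the scalar factor takes the form $P + 2\,\text{Re}(Q e^{2i\theta})$ with
\[
P = (k^{-2}|\alpha|^2 + h^{-2}|\gamma|^2) - (k^2|\beta|^2 + h^2|\delta|^2), \qquad Q = h^{-1}k^{-1}\gamma\bar\alpha - hk\beta\bar\delta,
\]
so nonvanishing for all $\theta$ is equivalent to $P^2 > 4|Q|^2$.

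The crux of the proof is the algebraic identity
\[
P^2 - 4|Q|^2 = \tfrac{1}{16}\det(g_1 - g_2),
\]
where the determinant refers to the $2\times 2$ matrix representing $g_1 - g_2$ in the real coordinates $(x,y)$ obtained from $z = x + iy$, via the formulas of Theorem~\ref{metricandvolume}; it will follow from a direct expansion of both sides, using $|\alpha\beta|^2 = (k^{-2}|\alpha|^2)(k^2|\beta|^2)$ and $|\gamma\delta|^2 = (h^{-2}|\gamma|^2)(h^2|\delta|^2)$ to match the cross terms. Since $g_1 > g_2$ forces $g_1 - g_2$ to be pointwise positive definite, $\det(g_1 - g_2) > 0$ throughout $\Sigma$, so the transversality scalar never vanishes and $s$ is transverse. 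This yields an AdS structure on $U$ with holonomy $\bar\rho$. The main technical obstacle is precisely this identity: the diagonal metric factors $h, k$ and the Higgs coefficients $\alpha, \beta, \gamma, \delta$ must conspire into the clean determinantal form, and verifying it requires careful bookkeeping of frames and line-bundle conventions.
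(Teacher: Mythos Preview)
Your proposal is correct and follows the same overall strategy as the paper: use Proposition~\ref{prop:equivalencedomination} to choose $\Sigma$ with $g_1 > g_2$, then verify that the tautological section $s\colon U\to\M_{\bar\rho}$ is transverse. The execution of the transversality check differs slightly. The paper writes $s_z = (X,0,0,Y)^T + c\,s_\theta$ and $s_{\bar z} = (Z,0,0,W)^T + \bar c\,s_\theta$, observes that any real linear dependence among $s, s_\theta, s_x, s_y$ forces $uX + \bar u Z = 0$ in the first entry, and then Lemma~\ref{XZindependence} shows this equation reduces to $(g_1 - g_2)(V,V)=0$ for the real vector $V$ attached to $u$, whence $u=0$ by definiteness. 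You instead compute the full four-fold wedge and identify its nonvanishing for all $\theta$ with $P^2>4|Q|^2$, which you then equate with $\tfrac{1}{16}\det(g_1-g_2)$; this identity does hold (both sides expand to the same symmetric expression in $k^{-2}|\alpha|^2,\,k^{2}|\beta|^2,\,h^{-2}|\gamma|^2,\,h^{2}|\delta|^2$ plus $8\,\mathrm{Re}(\alpha\beta\bar\gamma\bar\delta)$). The two arguments are equivalent in strength, since for a real symmetric $2\times 2$ form definiteness is exactly positivity of the determinant. Your route has the bonus of producing the Jacobian scalar $h^{-1}k^{-1}|W_1|^2 - hk|W_4|^2$ explicitly; this is exactly the quantity $XW - YZ$ that the paper computes separately for the volume formula in section~\ref{sec:volumes}.
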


The result obtained by Salein gives more information than this theorem, we will recover that later, in section \ref{sec:discontinuous}.
With theorem \ref{thm:geometricstructure} we can construct AdS structures on all the circle bundles with even Euler class between $2$ and $2g-2$.  Our proof of the theorem gives a way to express the AdS metric in terms of explicit coordinates on the manifold $U$. Using this we can easily see some properties of these structures, as we will see in the next sections. The proof will take the rest of this section.

We need to write the equations of $U$ as a subset of $E$. Let $v\in U$, we have
\begin{equation*}v=\begin{pmatrix}0\\\nu\\\omega\\0\end{pmatrix}, \hspace{1cm} 
\tau v = \begin{pmatrix}0&0&0&hk\\0&0&h^{-1}k&0\\0&hk^{-1}&0&0\\h^{-1}k^{-1}&0&0&0\end{pmatrix}\begin{pmatrix}0\\\bar\nu\\\bar\omega\\0\end{pmatrix}=\begin{pmatrix}0\\h^{-1}k\bar\omega\\hk^{-1}\bar\nu\\0\end{pmatrix}. \end{equation*}
The reality condition requires that $v=\tau v$, therefore $\omega=hk^{-1}\bar\nu.$ The $Q$-norm 1 condition requires that $Q(v,v)=1.$ Hence $\nu\omega=1,$ and then $|\nu|=({hk^{-1}})^{-\frac{1}{2}}$.  

We give a local coordinate description of the tautological section $s$. We can cover $\Sigma$ with little open sets $V_i$ bi-holomorphic to the unit disk $\{|z|<1\}$. Locally over every $V_i$, $U$ is $\{|z|<1\}\times S^1.$ Denote the function $g=({hk^{-1}})^{-\frac{1}{2}}$. Given coordinates $(z,\theta)$, we write $v(z,\theta)=\begin{pmatrix}0\\g e^{i\theta}\\ g^{-1}e^{-i\theta}\\0\end{pmatrix}\in U.$ 

Then the section $s$ at $v$ and its derivatives are given by
\begin{eqnarray*}
&s(v)=\begin{pmatrix}0\\g e^{i\theta}\\ g^{-1}e^{-i\theta}\\0\end{pmatrix}, \ \ \ \ &\nabla_{\frac{\partial}{\partial\theta}}s=\begin{pmatrix}0\\ige^{i\theta}\\-ig^{-1}e^{-i\theta}\\0\end{pmatrix},\\
&\nabla_{\frac{\partial}{\partial  z}}s=\begin{pmatrix}X\\0\\0\\Y\end{pmatrix}+c\nabla_{\frac{\partial}{\partial  \theta}}s, \ \ \ \ &\nabla_{\frac{\partial}{\partial\bar  z}}s=\begin{pmatrix}Z\\0\\0\\W\end{pmatrix}+\bar c\nabla_{\frac{\partial}{\partial  \theta}}s.\end{eqnarray*}
where $c=ig^{-1}\partial g,$ and

$$\begin{array}{ll}
X=\gamma ge^{i\theta}+\alpha g^{-1} e^{-i\theta}, \hspace{1cm} &Y=\beta ge^{i\theta}+\delta g^{-1} e^{-i\theta},\\
Z=h^2\bar \delta ge^{i\theta}+k^2\bar\beta g^{-1}e^{-i\theta}, &W=k^{-2}\bar \alpha g e^{i\theta}+h^{-2}\bar \gamma g^{-1}e^{-i\theta}.
\end{array}$$

\begin{Lemma}\label{XZindependence}
Suppose that the pull-back metrics satisfy $g_1>g_2$.
Then, if $u\in \C$ satisfies $uX+\bar uZ=0$, we have $u=0.$
\end{Lemma}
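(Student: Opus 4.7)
My plan is to re-group $uX+\bar u Z$ so the two summands carry the same factors $ge^{i\theta}$ and $g^{-1}e^{-i\theta}$ that appear in the definitions of $X,Z$, and then recognize that their modulus squared matches the formulas for $g_1,g_2$ from Theorem \ref{metricandvolume}. Concretely, using the explicit definitions of $X$ and $Z$,
$$uX+\bar u Z=(u\gamma+\bar u\,h^{2}\bar\delta)\,g\,e^{i\theta}+(u\alpha+\bar u\,k^{2}\bar\beta)\,g^{-1}e^{-i\theta}.$$
If this sum vanishes, the two summands are opposites, so in particular they have the same modulus. Squaring and using $|e^{\pm i\theta}|=1$ gives
$$g^{2}\,\bigl|u\gamma+\bar u\,h^{2}\bar\delta\bigr|^{2}=g^{-2}\,\bigl|u\alpha+\bar u\,k^{2}\bar\beta\bigr|^{2}.$$

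Next I would identify these moduli with the pull-back metrics. The same elementary expansion that produced the formulas in Theorem \ref{metricandvolume} gives, for the real tangent vector $\xi=u\,\partial_z+\bar u\,\partial_{\bar z}$,
$$g_{1}(\xi,\xi)=4k^{-2}\bigl|u\alpha+\bar u\,k^{2}\bar\beta\bigr|^{2},\qquad g_{2}(\xi,\xi)=4h^{-2}\bigl|u\gamma+\bar u\,h^{2}\bar\delta\bigr|^{2}.$$
Since $g^{2}=k/h$ (and $g^{-2}=h/k$), multiplying the previous equality by $4/(hk)$ and rearranging turns the displayed identity into $g_{1}(\xi,\xi)=g_{2}(\xi,\xi)$.

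Finally, the hypothesis $g_{1}>g_{2}$ says precisely that $g_{1}(\xi,\xi)>g_{2}(\xi,\xi)$ for every nonzero real tangent vector $\xi$. The equality just derived therefore forces $\xi=0$, which means $u=\bar u=0$, i.e.\ $u=0$.

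The only non-routine step is the initial one: observing that the natural grouping of $uX+\bar u Z$ by the powers of $ge^{\pm i\theta}$ yields exactly the two scalar quantities whose weighted modulus-squares are $g_{1}(\xi,\xi)$ and $g_{2}(\xi,\xi)$. Once that bookkeeping is spotted, the proof becomes a one-line modulus calculation plus a direct appeal to the domination hypothesis, so I expect no substantive obstacle beyond making the algebraic identification transparent.
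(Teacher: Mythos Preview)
Your proof is correct and follows essentially the same route as the paper's: both group $uX+\bar u Z$ by the factors $ge^{\pm i\theta}$, compare moduli, use $g^2=k/h$, and identify the resulting expressions with $g_1(\xi,\xi)$ and $g_2(\xi,\xi)$ via Theorem~\ref{metricandvolume} before invoking $g_1>g_2$. Your presentation is slightly more compact because you recognize the closed formulas $g_1(\xi,\xi)=4k^{-2}|u\alpha+\bar u k^2\bar\beta|^2$ and $g_2(\xi,\xi)=4h^{-2}|u\gamma+\bar u h^2\bar\delta|^2$ directly rather than expanding both sides, but the argument is the same.
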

\begin{proof} We carry out the following direct computation:
\begingroup
\allowdisplaybreaks
\begin{align*}
&\ \ \ \ \ \  uX+\bar uZ=0\\
& \Longrightarrow u (\gamma ge^{i\theta}+\alpha g^{-1}e^{-i\theta})+\bar u(h^2\bar \delta ge^{i\theta}+k^2\bar\beta g^{-1}e^{-i\theta})=0\\
&\Longrightarrow|u\gamma g e^{i\theta}+\bar uh^2\bar \delta g e^{i\theta}|^2=|u\alpha g^{-1}e^{-i\theta}+\bar u k^2\bar\beta g^{-1} e^{-i\theta}|^2\\
&\Longrightarrow|uh^{-1}\gamma+\bar uh\bar \delta |^2=|uk^{-1}\alpha+\bar u k\bar\beta|^2, \ \ \  \text{since\ \ $g=({hk^{-1}})^{-\frac{1}{2}}$}\\
&\Longrightarrow \gamma\delta u^2+|u|^2(h^{-2}|\gamma|^2+h^2|\delta|^2)+\bar\delta\bar\gamma \bar u^2=\\
& \ \ \ \ \ \ \ =\alpha\beta u^2+|u|^2(k^{-2}|\alpha|^2+k^2|\beta|^2)+\bar\alpha\bar\beta \bar u^2\\
&\text{Applying Proposition \ref{metricandvolume}}\\
& \Longrightarrow g_1\left(u\frac{\partial}{\partial z},\bar u\frac{\partial}{\partial \bar z}\right)=g_2\left(u\frac{\partial}{\partial z},\bar u\frac{\partial}{\partial\bar z}\right) \Longrightarrow u=0, \ \ \ \ \text{since}\ \  g_1>g_2.\qedhere
\end{align*}
\endgroup
\end{proof}

The transversality condition for the section $s$ is equivalent to the condition that at every point $(x,y,\theta) \in U$, the derivatives $\nabla_{\frac{\partial}{\partial x}} s, \nabla_{\frac{\partial}{\partial y}} s, \nabla_{\frac{\partial}{\partial \theta}} s$ form a basis of the tangent space at $s(x,y,\theta)$ to the fiber of $\M_{\bar{\rho}}$. Equivalently, we need to check that the four vectors $s, \nabla_{\frac{\partial}{\partial x}} s, \nabla_{\frac{\partial}{\partial y}} s, \nabla_{\frac{\partial}{\partial \theta}} s$ are linearly independent. 

\begin{Theorem}
If the pull-back metrics satisfy $g_1>g_2$, then the section $s$ is a transverse section.
\end{Theorem}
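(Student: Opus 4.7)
The plan is to show that if a real linear combination $a_1 s + a_2 \nabla_{\partial/\partial x} s + a_3 \nabla_{\partial/\partial y} s + a_4 \nabla_{\partial/\partial \theta} s$ vanishes, then all coefficients $a_i \in \R$ must be zero. Since the total real rank of the fiber of $E$ is $4$ and that of $\M_{\bar{\rho}}$ is $3$, this is exactly the transversality condition for the section $s$.

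First, I would switch from real to complex coordinates on $\Sigma$. Using $\partial/\partial x = \partial/\partial z + \partial/\partial\bar z$ and $\partial/\partial y = i(\partial/\partial z - \partial/\partial\bar z)$, and setting $u = a_2 + i a_3 \in \C$, the hypothetical vanishing combination becomes
\[
a_1\, s \;+\; u\, \nabla_{\frac{\partial}{\partial z}} s \;+\; \bar u\, \nabla_{\frac{\partial}{\partial \bar z}} s \;+\; a_4\, \nabla_{\frac{\partial}{\partial \theta}} s \;=\; 0.
\]
Substituting the explicit expressions from the excerpt, this vector has first and fourth components $uX + \bar u Z$ and $uY + \bar u W$, while the second and third components sit inside the $F_1$-direction and read
\[
\bigl(a_1 + i(a_4 + uc + \bar u \bar c)\bigr)\, g\, e^{i\theta} \quad\text{and}\quad \bigl(a_1 - i(a_4 + uc + \bar u \bar c)\bigr)\, g^{-1} e^{-i\theta},
\]
respectively. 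The key observation is that the first/fourth components and the second/third components decouple: the vanishing of each group is an independent condition.

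Next, I would apply Lemma \ref{XZindependence} to the first-component equation $uX + \bar u Z = 0$, which uses the strict domination hypothesis $g_1 > g_2$ in an essential way. It forces $u = 0$, and consequently $a_2 = a_3 = 0$. (The fourth-component equation $uY + \bar u W = 0$ is then automatic; an analogous lemma for $Y,W$ would also hold but is not needed.) With $u = 0$, the second and third components simplify to $a_1 + i a_4 = 0$ and $a_1 - i a_4 = 0$ (after dividing by $g e^{i\theta}$ and $g^{-1} e^{-i\theta}$, which are nowhere vanishing since $g > 0$). Adding and subtracting these immediately gives $a_1 = 0$ and $a_4 = 0$.

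The main obstacle has already been overcome in Lemma \ref{XZindependence}, where the positivity of $g_1 - g_2$ enters as a positive-definiteness argument on a Hermitian-type form in $u$. Beyond that, the proof is pure bookkeeping with the matrix formulas for $\nabla^\C$ and the tautological section. The only care needed is the decoupling between the $F_1$-components (indices $2,3$) and the $F_2$-components (indices $1,4$), which reflects the $Q$-orthogonality of the splitting $E = F_1 \oplus F_2$ and makes the transversality check reduce cleanly to Lemma \ref{XZindependence}.
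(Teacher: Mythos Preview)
Your proof is correct and follows essentially the same approach as the paper: set up the vanishing real linear combination, pass to complex coordinates with $u=a_2+ia_3$, use Lemma~\ref{XZindependence} on the first component to kill $u$, and then read off $a_1=a_4=0$ from the remaining $F_1$-components. You spell out the last step (solving $a_1\pm i a_4=0$) a bit more explicitly than the paper, which simply declares it clear, but the argument is otherwise identical.
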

\begin{proof}
Suppose that there exists $a, b, c, d\in \R$ such that \begin{equation*}a\nabla_{\frac{\partial}{\partial  x}}s+b\nabla_{\frac{\partial}{\partial y}}s+c\nabla_{\frac{\partial}{\partial \theta}}s+ds=0.\end{equation*} 

Written $\nabla_{\frac{\partial}{\partial  z}}s, \nabla_{\frac{\partial}{\partial  \bar z}}s$ instead of $\nabla_{\frac{\partial}{\partial  x}}s, \nabla_{\frac{\partial}{\partial  y}}s$, we have 
\begin{equation*}(a+bi)\nabla_{\frac{\partial}{\partial  z}}s+(a-bi)\nabla_{\frac{\partial}{\partial \bar z}}s+c\nabla_{\frac{\partial}{\partial \theta}}s+ds=0.\end{equation*}

Taking the first entry of the vector of $(a+bi)\nabla_{\frac{\partial}{\partial  z}}s+(a-bi)\nabla_{\frac{\partial}{\partial \bar z}}s+c\nabla_{\frac{\partial}{\partial \theta}}s+ds$,
\begin{equation}
(a+bi)X+(a-bi)Z=0.
\end{equation}
By Lemma \ref{XZindependence}, we obtain that $a=b=0$.

Therefore $c\nabla_{\frac{\partial}{\partial \theta}}s+ds=0$. It is clear that $c=d=0.$
\end{proof}

\section{Minimal immersions and fibers of the circle bundle}  \label{sec:fibers}

Using the above construction, not only we construct the AdS 3-manifold, but also recover properties of the circle fibers of $U$. Let $z\in \Sigma$, and let $C=p^{-1}(c)$, the fiber of $z$ in $U$, topologically a circle. By fixing a base point $c \in C$, the circle $C$ becomes a loop in $U$. Let $\widetilde{C}$ be a lift of this loop to the universal covering $\widetilde{U}$. The image $D(\widetilde{C})$ of $\widetilde{C}$ by the developing map $D$, is a curve in $\M$, that we will call the developing image of $C$. In the next theorem we describe the curve $D(\widetilde{C})$. A similar theorem was proved by Gu\'eritaud and Kassel in \cite{GueritaudKassel}.

\begin{Theorem} \label{thm:fibers}
The developing image $D(\widetilde{C})$ is a time-like geodesic loop that turns once around $\M$.
\end{Theorem}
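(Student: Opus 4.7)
The plan is to compute $D(\widetilde C)$ explicitly from the tautological section $s$ and then recognize it as a standard closed geodesic of $\M$. On the chart $V_i$ used in the previous section, $U|_{V_i}\cong V_i\times S^1$ and the restriction of $s$ to the fiber $C$ over $z\in\Sigma$ is the curve $\theta\mapsto v(z,\theta)=(0,\,ge^{i\theta},\,g^{-1}e^{-i\theta},\,0)^T\in F_1|_z\subset E|_z$. Because the loop $C$ lies in the kernel of $p_*$, its $\bar\rho$-holonomy is trivial; hence the local trivialization of $\M_{\bar\rho}$ along $C$ extends consistently to a loop in a single fiber $\cong\M$, and $D(\widetilde C)$ is exactly this explicit curve, viewed inside one copy of the model $\M$.

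Next I would identify the ambient $2$-plane and verify the geodesic equation. Setting $w_1=v(z,0)=(0,g,g^{-1},0)^T$ and $w_2=v(z,\pi/2)=(0,ig,-ig^{-1},0)^T$, the reality condition $g^2=hk^{-1}$ places $w_1,w_2$ in the real plane $F_1|_z$, and $v(z,\theta)=\cos\theta\cdot w_1+\sin\theta\cdot w_2$. A direct computation from the explicit matrix of $Q$ gives $Q(w_i,w_j)=\delta_{ij}$, so $Q$ is positive definite on $F_1|_z$ (consistent with $F_1$ being time-like). The tangent $\dot v=-\sin\theta\cdot w_1+\cos\theta\cdot w_2$ therefore satisfies $Q(\dot v,\dot v)=1$ (time-like, unit speed) and $\ddot v=-v=-Q(\dot v,\dot v)\,v$, which is precisely the geodesic equation for the quadric $\M=\{Q=1\}$ inside $(\R^4,Q)$. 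So $D(\widetilde C)$ is a closed time-like geodesic.

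For the winding claim I would use that the fiber of $E$ at $z$ splits $Q$-orthogonally as $F_1|_z\oplus F_2|_z$ with signatures $(2,0)$ and $(0,2)$, so the map $x=x_++x_-\mapsto x_+/\sqrt{Q(x_+,x_+)}$ is well defined on $\M$ (since $Q(x_+,x_+)=1-Q(x_-,x_-)\ge 1$) and is a deformation retraction of $\M$ onto the $Q$-unit circle of $F_1|_z$. This identifies $\pi_1(\M)\cong\Z$ with the homotopy class of this circle, and the parametrization $\theta\in[0,2\pi]$ traverses it exactly once, giving the once-around assertion. The only genuinely delicate point is the first step: one must justify carefully that the triviality of $\bar\rho$ along fibers together with the tautological form of $s$ force the developing image to be the literal curve $\theta\mapsto v(z,\theta)$; once that is granted, the rest is bookkeeping in the explicit coordinates already set up.
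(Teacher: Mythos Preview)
Your proof is correct and follows essentially the same approach as the paper: both compute the first and second $\theta$-derivatives of $s$ from the explicit formula, verify $Q(\dot s,\dot s)=1$ and $\ddot s=-s$ to obtain the time-like geodesic condition, and use the triviality of the pulled-back flat connection along the fiber (equivalently, triviality of $\bar\rho$ on the fiber class) to identify $D(\widetilde C)$ with the explicit curve $\theta\mapsto(0,ge^{i\theta},g^{-1}e^{-i\theta},0)^T$. Your deformation-retraction argument for the once-around claim is more explicit than the paper's one-line assertion, but the underlying content is the same.
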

\begin{proof}
We can compute the tangent vector along the fiber and its derivative:
$$s = \begin{pmatrix}0\\ge^{i\theta}\\g^{-1}e^{-i\theta}\\0\end{pmatrix} \hspace{0.7cm} \nabla_{\frac{\partial}{\partial\theta}}s=\begin{pmatrix}0\\ige^{i\theta}\\-ig^{-1}e^{-i\theta}\\0\end{pmatrix}  \hspace{0.7cm}   \nabla_{\frac{\partial}{\partial\theta}}\nabla_{\frac{\partial}{\partial\theta}}s=-\begin{pmatrix}0\\ge^{i\theta}\\g^{-1}e^{-i\theta}\\0\end{pmatrix}=-s $$
From the second formula we see that $Q(\nabla_{\frac{\partial}{\partial\theta}}s,\nabla_{\frac{\partial}{\partial\theta}}s)=1>0$, hence the fiber is time-like, from the third formula we see that it is a geodesic. The flat connection we have on the bundle $p^*E$ is the pull back of a flat connection on $E$, hence it is trivial on the fiber. So the formula given for $s$ gives an explicit parametrization of $D(\widetilde{C})$ in $\M$, and we can see that it is a loop and it turns around once.
\end{proof}

We can use the structure of the circle fibers of an AdS structure on $U$ to construct and characterize minimal immersions of Riemann surfaces into quadrics. This is a new result of this paper. The bundle $E$ over $\Sigma$ has structure group $O(2,2)$, and this group also acts on $\text{Gr}(2,4)$, the Grassmannian of $2$-planes in $\R^4$. By changing fiber, the bundle $E(\text{Gr}(2,4))$ with fiber $\text{Gr}(2,4)$ and same structure group inherits also a flat structure. Every circle fiber of $U$ corresponds to a time-like geodesic, i.e. to a $2$-plane in the corresponding fiber of $E$. This gives a section of $E(\text{Gr}(2,4))$. As in section \ref{sec:geometricstructures}, such a section induces a $\rho_1\otimes \rho_2$-equivariant map
$f:\widetilde{\Sigma} \rightarrow \text{Gr}(2,4).$

The map $f$ takes values in the time-like part of $\text{Gr}(2,4)$, here denoted by $\text{TGr}(2,4)$ which is a homogeneous space for $O(2,2)$. The \defin{Pl\"ucker embedding} 
$$\text{Gr}(2,4) \ni \text{Span}(v,w) \rightarrow v\wedge w \in \P\left(\Lambda^2 \R^4\right) $$
identifies the Grassmannian with a hypersurface of a projective space, usually called the \defin{Klein quadric}, with quadratic equation
$$\text{Gr}(2,4) = \{ [v] \in \P\left(\Lambda^2 \R^4\right) \ |\ v \wedge v = 0 \} $$
The wedge product induces a symmetric bilinear form $v \wedge w$ on $\Lambda^2 \R^4 $ with signature $(3,3)$. This descends to a signature $(2,2)$ pseudo-Riemannian metric on $\text{Gr}(2,4)$ which is preserved by the action of $O(2,2)$. We remark that, by a quite general result about real projective structures (see \cite[Lemma 3.5.0.1]{BaragliaThesis}) the differential of $f$ has always image in a definite subspace of the tangent space to $\text{Gr}(2,4)$.

In the next theorem we will prove that the map $f$ is always harmonic, and, if $\Sigma$ is the holomorphic structure with vanishing Pfaffian (see section \ref{sec:domination}), the map $f$ is also conformal, hence it is a minimal immersion. This gives a way to construct $\rho_1\otimes \rho_2$-equivariant minimal immersions in the Klein quadric, and a geometric interpretation of the holomorphic structure with vanishing Pfaffian. In the last theorem of the section, we will show that representations in $O(2,2)$ admitting an equivariant minimal immersion are all associated with some AdS structure, obtaining in this way a complete characterization. 

This relationship between conformality and the Pfaffian is in contrast with the case for $Sp(4,\R)$, studied by Baraglia \cite{BaragliaThesis}, where the conformality comes from vanishing of the Hopf differential. 

Now we prove the claims. Locally, the map $f$ can be written as $f=s\wedge \nabla_{\frac{\partial}{\partial\theta}}s$. In the following, for simplicity, we will denote $\nabla_{\frac{\partial}{\partial\theta}}s,\nabla_{\frac{\partial}{\partial z}}s,\nabla_{\frac{\partial}{\partial\bar z}}s$ by $s_{\theta}, s_z,s_{\overline{z}}$.

\begin{Theorem}     \label{thm:harmimm}
The map $f$ is harmonic and it is conformal if and only if $\alpha\beta=\gamma\delta.$ In this case $f$ is a $\rho_1 \otimes \rho_2$-equivariant minimal immersion in the Klein quadric with image in $\text{TGr}(2,4)$.
\end{Theorem}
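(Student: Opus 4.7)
The plan is to lift $f$ to the $\theta$-independent section $\xi = s\wedge s_\theta$ of $\Lambda^2 E$. In the local Higgs bundle frame of Section~\ref{sec:higgsandflat}, this reduces to $\xi = -2i\,e_2\wedge e_3$, reflecting the fact that $(s,s_\theta)$ is a $Q$-orthonormal time-like frame of $F_1$ whose span is $F_1$ itself. The Plücker map $\xi\colon\Sigma \to \Lambda^2\R^4$ then lands inside $\{\xi\wedge\xi=0\}\cap\{\Lambda^2Q(\xi,\xi)=1\}$, which is a specific copy of $\text{TGr}(2,4)$ embedded in flat $\Lambda^2\R^4$. All subsequent computations take place in the basis $\{e_{ij}:=e_i\wedge e_j\}$ of $\Lambda^2 E^{\C}$, using $\nabla_z = \partial + H^{-1}\partial H + \Phi$ and $\nabla_{\bar z} = \bar\partial + \Phi^{*_H}$.

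For conformality, I expand $\xi_z = s_z\wedge s_\theta + s\wedge s_{z\theta}$. Since the pullback connection on $p^*E$ is trivial along the fibers of $p$, mixed partials commute, and direct cancellation places $\xi_z$ in $\text{Span}(e_{12},e_{13},e_{24},e_{34})$ with coefficients proportional to $\alpha,\gamma,\delta,\beta$. Evaluating the wedge pairing yields $\xi_z\wedge\xi_z$ proportional to $(\gamma\delta-\alpha\beta)\,e_{1234}$, so $f$ is conformal (in the conformal class on the Klein quadric induced by the wedge pairing on $\Lambda^2\R^4$) if and only if the Pfaffian $\alpha\beta-\gamma\delta$ vanishes.

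For harmonicity, I apply the ambient characterization: with the Plücker embedding above, $\text{TGr}(2,4)$ is a pseudo-Riemannian submanifold of flat $\Lambda^2\R^4$, so $f$ is harmonic iff $\xi_{z\bar z}$ lies in the normal bundle. The tangent space of $\text{TGr}(2,4)$ at $[\xi]=[e_{23}]$ is spanned by the four mixed directions $e_{12},e_{13},e_{24},e_{34}$ (those $\eta$ with $\xi\wedge\eta=0$ and $\Lambda^2Q(\xi,\eta)=0$), and the normal complement is $\text{Span}(e_{14},e_{23})$. I now compute $\xi_{z\bar z} = \nabla_{\bar z}\xi_z$ using holomorphicity of $\alpha,\beta,\gamma,\delta$ and the matrix of $\Phi^{*_H}$; the crucial observation is that $\Phi^{*_H}$ sends each $e_{ij}\in\{e_{12},e_{13},e_{24},e_{34}\}$ into $\text{Span}(e_{14},e_{23})$, so no tangential contribution can arise. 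Explicitly,
\[
\xi_{z\bar z} = -2i(A+B)\,e_{23} + 2i(A-B)\,e_{14},\qquad A=k^{-2}|\alpha|^2+k^2|\beta|^2,\ B=h^{-2}|\gamma|^2+h^2|\delta|^2,
\]
which lies entirely in $\text{Span}(e_{14},e_{23})$, so $f$ is harmonic.

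The main technical point is setting up the Plücker embedding so that the harmonicity criterion reads cleanly in this basis, and then verifying the $\Phi^{*_H}$-observation that drives the vanishing of tangential contributions to $\xi_{z\bar z}$; this relies on the specific diagonal form of $H$ and the precise entries of $\Phi^{*_H}$ from Section~\ref{sec:higgsandflat}. Once harmonicity and the conformality characterization are in place, the final assertion is immediate: a conformal harmonic map from a Riemann surface to a pseudo-Riemannian target is a minimal (possibly branched) immersion, with branch points at the common zeros of $\alpha,\beta,\gamma,\delta$, and the image lies in $\text{TGr}(2,4)$ because $s,s_\theta$ are time-like.
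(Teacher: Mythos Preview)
Your proof is correct and follows essentially the same strategy as the paper: lift $f$ to the Pl\"ucker section $\xi=s\wedge s_\theta$, compute $\xi_z$ in the Higgs frame, and read off both conformality (via $\xi_z\wedge\xi_z$) and harmonicity. The computations match: your $\xi_z\in\text{Span}(e_{12},e_{13},e_{24},e_{34})$ with coefficients $2i\alpha,-2i\gamma,-2i\delta,2i\beta$ is exactly the paper's $f_z=(2i\alpha,-2i\gamma,0,0,-2i\delta,2i\beta)$ in minor coordinates, and your $\xi_z\wedge\xi_z\propto(\gamma\delta-\alpha\beta)$ reproduces the paper's $-8(\alpha\beta-\gamma\delta)d\text{Vol}_{\R^4}$.

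Two small differences are worth noting. First, you observe at the outset that $\xi=-2i\,e_{23}$ is $\theta$-independent and work directly with it; the paper instead carries the $\theta$-dependent $s,s_\theta,s_z,s_{\theta,z}$ through the wedge computations and lets the dependence cancel, which is equivalent but heavier. Second, for harmonicity the paper simply asserts ``since it is holomorphic, $\widetilde{\nabla}_{\bar z}f_z=0$'', whereas you actually compute $\xi_{z\bar z}=\nabla_{\bar z}\xi_z$ via $\Phi^{*_H}$ and check that it lands in the normal span $\text{Span}(e_{14},e_{23})$. Your observation that $\Phi^{*_H}$ swaps $\text{Span}(e_{12},e_{13},e_{24},e_{34})$ and $\text{Span}(e_{14},e_{23})$ (the $\Z_2$-grading coming from $F_1$ versus $F_2$) is precisely what makes the paper's one-line justification work, so you have spelled out the step the paper compresses.
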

\begin{proof}
For the map from surface, the condition that $f$ is harmonic is equivalent to $\widetilde{\nabla}_{\bar{z}} f_z = 0$, where $\widetilde{\nabla}$ is the Levi-Civita connection on $\text{Gr}(2,4)$ (see \cite{Helein_harmonicmaps} page 425). We can compute $f_z$: 
\[f_z=(s\wedge s_{\theta})_z=s_z\wedge s_{\theta}+s\wedge s_{\theta,z},\]

Differentiating $s_z$ with respect to $\theta$, $s_{z,\theta}=\left( \begin{array}{c}
i\gamma g e^{i\theta}-i\alpha g^{-1}e^{-i\theta} \\
0\\
0\\
i\beta ge^{i\theta}-i\delta g^{-1}e^{-i\theta} \end{array} \right)-c s,$ using the formula $\nabla_{\frac{\partial}{\partial\theta}}\nabla_{\frac{\partial}{\partial\theta}}s=-s.$

In the coordinates given by the six minors of a $2\times 4$ matrix, we have
$$s_z\wedge s_{\theta} =  (i\gamma g^2 e^{2 i\theta } + i\alpha, -i\gamma - i\alpha g^{-2} e^{-2i\theta }, 0, 0, -i\beta g^2 e^{2i\theta } -i\delta, i\beta+ i\delta g^{-2} e^{-2i\theta })$$
$$s\wedge s_{\theta,z} =  (-i\gamma g^2 e^{2 i\theta } + i\alpha, -i\gamma + i\alpha g^{-2} e^{-2i\theta }, 0, 0, i\beta g^2 e^{2i\theta } -i\delta, i\beta- i\delta g^{-2} e^{-2i\theta })$$

$f_z = (2i\alpha, -2i\gamma, 0, 0, -2i\delta, 2i\beta)$, and since it is holomorphic, $\widetilde{\nabla}_{\bar{z}} f_z = 0$.

The condition that the map $f$ is conformal is equivalent to $\left<f_z,f_z\right>=0$, where the pairing is the pseudo-Riemannian structure on $Gr(2,4)$. 
 \[\left<f_z,f_z\right>=f_z\wedge f_z=2s_z\wedge s_{\theta}\wedge s\wedge s_{\theta,z} \]
\[
=2\det\left( \begin{array}{cccc}
\gamma g e^{i\theta}+\alpha g^{-1}e^{-i\theta}&0&0&i\gamma g e^{i\theta}-i\alpha g^{-1}e^{-i\theta} \\
0&ige^{i\theta}&ge^{i\theta}&0\\
0&-ig^{-1}e^{-i\theta}&g^{-1}e^{-i\theta}&0\\
\beta ge^{i\theta}+\delta g^{-1}e^{-i\theta}&0&0&
i\beta ge^{i\theta}-i\delta g^{-1}e^{-i\theta} \end{array} \right)d\text{Vol}_{{\R}^4}\]
\[\
=-8(\alpha\beta-\gamma\delta)d\text{Vol}_{{\R}^4}.\]

Hence the map $f$ is conformal if and only if $\alpha\beta=\gamma\delta.$ 
\end{proof}

\begin{Theorem}  \label{thm:minsurf}
Let $S$ be a closed surface and $\rho:\pi_1(S) \rightarrow O(2,2)$ be a representation. Then there exists a $\rho$-equivariant minimal immersion $f:\widetilde{S} \rightarrow \text{TGr}(2,4)$ if and only if there exists a finite covering $p:S'\rightarrow S$ and representations $\rho_1,\rho_2:\pi_1(S') \rightarrow SL(2,\R)$ such that $\rho \circ p_* = \rho_1 \otimes  \rho_2$ and $\rho_1$ strictly dominates $\rho_2$.  

Moreover, the conformal structure of this minimal surface is equivalent to the special holomorphic structure on $S'$ with vanishing Pfaffian found in \cite{TholozanDeforming}.
\end{Theorem}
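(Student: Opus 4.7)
The proof splits into two implications, of which the reverse direction is relatively direct and the forward direction is the main content.

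For the implication $(\Leftarrow)$, I would first apply Proposition \ref{prop:equivalencedomination} to equip $S'$ with the unique holomorphic structure for which the Higgs bundle data satisfies $\alpha\beta = \gamma\delta$, then invoke Theorem \ref{thm:harmimm} to produce a $(\rho_1\otimes\rho_2)$-equivariant minimal immersion $f:\widetilde{S'}\to \text{TGr}(2,4)$. Since $\widetilde{S'}=\widetilde{S}$ canonically, $f$ is a map from $\widetilde{S}$ that is equivariant for $\rho$ restricted to the finite-index subgroup $p_*(\pi_1(S'))$. Full $\pi_1(S)$-equivariance should follow from the intrinsic characterization of $f(x)$ as the time-like 2-plane canonically attached by $\rho$ to the point $p(x)\in S$: for any $g\in\pi_1(S)$, the map $x\mapsto \rho(g)^{-1} f(g\cdot x)$ is another equivariant minimal immersion built from the same $\rho$-data, so it coincides with $f$.

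For the converse $(\Rightarrow)$, a $\rho$-equivariant minimal immersion $f$ assigns to each $x\in\widetilde{S}$ a time-like $2$-plane in $(\R^4,Q)$, and equivariance makes this into a rank $2$ time-like sub-bundle $F_1\subset E$ whose $Q$-orthogonal complement $F_2=F_1^{\perp_Q}$ is rank $2$ and space-like. This is a reduction of the $O(2,2)$-structure of $E$ to $O(2)\times O(2)$. I would then pass to a finite cover $p:S'\to S$ in successive stages: first so that $\rho\circ p_*$ lands in $SO_0(2,2)$ (using $[O(2,2):SO_0(2,2)]=4$), next so that $F_1$ and $F_2$ become orientable, and finally so that the resulting $SO(2)\times SO(2)$ reduction lifts through $SL(2,\R)\times SL(2,\R)\twoheadrightarrow SO_0(2,2)$. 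The outcome is a pair $\rho_1,\rho_2:\pi_1(S')\to SL(2,\R)$ with $\rho\circ p_*=\rho_1\otimes\rho_2$, and the tensor splitting of Section \ref{sec:higgsandflat} identifies $F_1\oplus F_2$ with the Higgs-theoretic time-like/space-like splitting.

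To extract strict domination and the conformal structure claim, I would reverse-engineer Section \ref{sec:construction}: the unit sphere bundle of $F_1$ is precisely the circle bundle $U$, and the immersion $f$ induces the tautological section considered there. Running the implication in Lemma \ref{XZindependence} backwards, the transversality that underlies the immersion property of $f$ forces strict positivity of $g_1-g_2$, which by Proposition \ref{prop:equivalencedomination} is equivalent to strict domination of $\rho_1$ over $\rho_2$ (after relabeling if necessary). Conformality of $f$ combined with the computation $\langle f_z,f_z\rangle=-8(\alpha\beta-\gamma\delta)\,d\text{Vol}_{\R^4}$ from Theorem \ref{thm:harmimm} forces $\alpha\beta=\gamma\delta$ on $S'$, identifying the conformal structure of $f(\widetilde{S'})$ with the Tholozan holomorphic structure.

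The main obstacle is the $(\Rightarrow)$ direction, specifically coordinating the several finite-cover passages and verifying that the abstract time-like sub-bundle $F_1$ obtained from $f$ genuinely agrees on $S'$ with the summand coming from a bona fide Higgs-theoretic tensor decomposition; this is where the Klingler-Kulkarni-Raymond-Kassel classification of closed AdS $3$-manifolds up to finite covers, alluded to in the introduction, enters implicitly to guarantee that a compatible decomposition exists.
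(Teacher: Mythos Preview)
Your $(\Leftarrow)$ direction is essentially the paper's: it also just points back to Theorem~\ref{thm:harmimm}. Your $(\Rightarrow)$ direction, however, diverges from the paper and has two concrete gaps. The paper does \emph{not} try to read off $\rho_1,\rho_2$ from the sub-bundle $F_1$ determined by $f$. Instead it first turns the minimal immersion into a closed AdS $3$-manifold: conformality forces the image of $df$ into a definite subspace of $T\,\text{Gr}(2,4)$, so Baraglia's \cite[Prop.~3.5.1]{BaragliaThesis} produces a real projective structure on a circle bundle over $S$ with holonomy $\rho$; since the time-like geodesics lie in $\M$ and the holonomy is in $O(2,2)$, this is actually an AdS structure. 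Only then do the black boxes apply: Klingler gives completeness, Kulkarni--Raymond gives the tensor-product form on a finite cover, and Kassel gives strict domination. You cite these same results, but you invoke them without having built the AdS manifold they require as input; the Baraglia step is exactly what is missing from your outline.

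The specific mechanisms you propose in its place do not work. An $SO(2)\times SO(2)$ reduction of $E$ is a non-parallel splitting and does not by itself lift the holonomy $\rho$ to $SL(2,\R)\times SL(2,\R)$; such a lift exists after a double cover for cohomological reasons, but it has nothing to do with your $F_1$, so there is no argument that the Higgs-theoretic time-like summand for the resulting $(\rho_1,\rho_2)$ coincides with $F_1^f$. (This can be repaired by identifying $\text{TGr}(2,4)$ with $\HH\times\HH$ and recognising $f$ as the pair $(\psi_1,\psi_2)$ of equivariant harmonic maps, but you do not do this.) Second, ``running Lemma~\ref{XZindependence} backwards'' is taking the converse, not the contrapositive: the proof passes from $uX+\bar uZ=0$ to an equality of moduli and then to $g_1(v,v)=g_2(v,v)$, and the first implication is not reversible, so transversality of $s$ does not yield $g_1>g_2$ through that lemma.
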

\begin{proof}
The ``if'' part was proven in the previous theorem. To see the ``only if'' part, we notice that the conformality condition guaranties that the image of the differential of the map lies in the positive definite or negative definite part of the tangent space to $\text{TGr}(2,4)$. By \cite[Proposition 3.5.1]{BaragliaThesis}, $f$ induces a real projective structure on a circle bundle over $S$, with holonomy $\rho$. The developing image of this structure is contained in the model space $\M$, because all the time-like geodesics are entirely contained in $\M$. Since the holonomy is in $O(2,2)$, this real projective structure is actually an AdS structure. As explained in the introduction, up to a finite covering, this structure is a quotient of $\M$ by the action of a tensor product $\rho_1\otimes\rho_2$. By Kassel's theorem \cite{KasselThese}, $\rho_1$ strictly dominates $\rho_2$.
\end{proof}

\section{Properly discontinuous actions}  \label{sec:discontinuous}

In this section we will recover the theorem proved in \cite{SaleinExotiques} and later in \cite{GueritaudKassel}, here as theorem \ref{thm:salein}. This is interesting as an example where the theories of Higgs bundles and of geometric structures make it possible to understand the dynamical properties of representations: in this case we will show that if $\rho_1$ strictly dominates $\rho_2$, the representation $\rho_1 \otimes \rho_2$ acts freely and properly discontinuously on $\M$.
 
We need to remark that the model space $\M$ we have chosen for AdS geometry is not simply connected, it is a $3$-dimensional cylinder $\R^2 \times S^1$, hence $\pi_1(\M) = \Z$. Consider $\widetilde{\M}$, the universal covering of $\M$, with its induced AdS structure. Following \cite{KulkarniRaymond}, the covering $\widetilde{\M} \rightarrow \M$ is equivariant for the natural homomorphism:
$$p:\text{Isom}_0(\widetilde{\M}) =  \widetilde{SL(2,\R)} \times \widetilde{SL(2,\R)}/ Z( \widetilde{SL(2,\R)}) \rightarrow$$
$$\rightarrow SL(2,\R) \times SL(2,\R)/ \Z_2 = SO_0(2,2)$$ 

\begin{Theorem} \label{thm:salein}
Let $S$ be a closed surface. Given two representations $\rho_1,\rho_2$ of $\pi_1(S)$ in $SL(2,\R)$ such that $\rho_1$ strictly dominates $\rho_2$, the representation $\rho_1\otimes \rho_2$ acts freely and properly discontinuously on $\M$, and the quotient $\M / \rho_1\otimes \rho_2$ is the AdS structure we constructed on $U$ in section \ref{sec:construction}. 
\end{Theorem}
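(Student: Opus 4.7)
The plan is to leverage the completeness of closed AdS $3$-manifolds to convert the construction of section \ref{sec:construction} into an identification of $U$ with $\M/(\rho_1\otimes\rho_2)$. I would start from the transverse section $s:U\to \M_{\bar{\rho}}$ produced in section \ref{sec:construction} and pass to its associated $\bar{\rho}$-equivariant developing map $D:\widetilde{U}\to\M$. Because $\pi_1(\M)=\Z$, I would lift $D$ through the universal covering $\widetilde{\M}\to\M$ to obtain $\widetilde{D}:\widetilde{U}\to\widetilde{\M}$, equivariant for a lift $\tilde{\rho}:\pi_1(U)\to\widetilde{SL(2,\R)}\times\widetilde{SL(2,\R)}/Z(\widetilde{SL(2,\R)})$ of $\bar{\rho}$ along the map $p$ described just before the statement.

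Next I would use Theorem \ref{thm:fibers} to pin down the behaviour of $\tilde{\rho}$ on the central subgroup $\langle\gamma\rangle\subset\pi_1(U)$ generated by a circle fiber. Since that fiber develops to a time-like geodesic loop winding exactly once around $\M$, the element $\tilde{\rho}(\gamma)$ must lie in $\ker(p)$ and generate the deck group of $\widetilde{\M}\to\M$. In particular, the $\tilde{\rho}$-action of $\pi_1(U)$ on $\widetilde{\M}$ descends, after quotient by $\langle\gamma\rangle$, to the original $(\rho_1\otimes\rho_2)$-action of $\pi_1(S)=\pi_1(U)/\langle\gamma\rangle$ on $\M=\widetilde{\M}/\langle\tilde{\rho}(\gamma)\rangle$.

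The key step is to invoke Klingler's completeness theorem \cite{KlinglerCompletude}: every closed Lorentzian $3$-manifold of constant curvature is geodesically complete. Since $U$ is closed, the AdS structure built in section \ref{sec:construction} is complete, so standard $(G,X)$-structure theory forces $\widetilde{D}$ to be a covering map onto $\widetilde{\M}$; as $\widetilde{\M}$ is simply connected, $\widetilde{D}$ is in fact a diffeomorphism. Hence $\pi_1(U)$ acts freely and properly discontinuously on $\widetilde{\M}$ via $\tilde{\rho}$, with quotient $U$. Descending by the central $\Z=\langle\gamma\rangle$ yields a free, properly discontinuous action of $\pi_1(S)$ on $\M$ via $\rho_1\otimes\rho_2$, whose quotient is canonically identified with the AdS manifold of section \ref{sec:construction}.

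The main obstacle is the appeal to completeness; without Klingler's theorem one would have to verify by hand that the explicit developing map $\widetilde{D}:\widetilde{U}\cong\R^3\to\widetilde{\M}\cong\R^3$ is a global diffeomorphism. This is plausible using the circle-fibered structure of $U$ together with the explicit fiberwise parametrization of $s$ and the description of $\widetilde{\M}$ as a line bundle over $\HH$, but it is technically more involved. All remaining steps are formal covering-space bookkeeping.
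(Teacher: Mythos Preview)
Your proposal is correct and follows essentially the same route as the paper: both proofs invoke Klingler's completeness theorem to upgrade the developing map to a diffeomorphism $\widetilde{U}\to\widetilde{\M}$, use Theorem~\ref{thm:fibers} to identify the image of the fiber class with a generator of $\mathrm{Deck}(\widetilde{\M}\to\M)$, and then conclude by covering-space bookkeeping that $\M/(\rho_1\otimes\rho_2)\cong U$. The only cosmetic difference is the order of operations (you lift before applying completeness, the paper applies completeness first and then lifts), which is immaterial.
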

\begin{proof}
By the result of Klingler \cite{KlinglerCompletude}, every AdS structure on a closed manifold is \defin{complete}, i.e., the developing map $\widetilde{U} \rightarrow \M$ is a covering. Hence it lifts to a map $\widetilde{U} \rightarrow \widetilde{\M}$ which is an isomorphism, and the action of $\pi_1(U)$ on $\widetilde{U}$ by deck transformations induces a representation $\sigma:\pi_1(U) \rightarrow \text{Isom}_0(\widetilde{\M})$. By construction of $\sigma$ we have that $\widetilde{\M}/\sigma(\pi_1(U))$ is isomorphic to $U$: 
$\widetilde{\M} \rightarrow  \widetilde{\M}/\sigma(\pi_1(U)) = U$. 

We will show that the model space $\M = \widetilde{\M}/\text{Deck}(\widetilde{\M})$ is an intermediate covering, in other words we need to show that $\text{Deck}(\widetilde{\M})<\sigma(\pi_1(U))$.

The covering $\widetilde{\M} \rightarrow \M$ is equivariant for the projection $p:\text{Isom}_0(\widetilde{\M}) \rightarrow SO_0(2,2)$ described above, hence the holonomy representation $\overline{\rho_1\otimes\rho_2}$ is exactly $p\circ\sigma$.

Let $c\in \pi_1(U)$ be the class of the circle fiber. We have seen in theorem \ref{thm:fibers} that the developing image of the circle fiber of $U$ is a loop in $\M$, that turns once around $\M$, hence $\sigma(c)\in\text{Isom}_0(\widetilde{\M})$ is a generator of the group $\text{Deck}(\widetilde{\M})$. So we have:
$$\widetilde{\M} \rightarrow \widetilde{\M}/ \langle \sigma(c)\rangle = \M \rightarrow  \widetilde{\M}/ \sigma(\pi_1(U)) = U$$
Then $\M / \rho_1\otimes\rho_2 = U$, hence $\rho_1\otimes\rho_2$ acts freely and properly discontinuously. 
\end{proof}

\section{Volume of AdS 3-manifolds}  \label{sec:volumes}

The computation of the volume of closed AdS 3-manifolds was  Question 2.3 in the list of open problems \cite{BBDGGKKSZ}. It was first answered some months ago in Nicolas Tholozan's thesis \cite{TholozanThese}, see also \cite{TholozanVolumes}. Labourie then related the volumes of AdS $3$-manifolds with the Chern-Simons invariants. Here we give a different proof of the formula for the volume, based on our construction of the AdS structures.

\begin{Theorem}
The volume of $\M / \rho_1\otimes\rho_2$ is $\pi^2 |e_1+e_2|$.
\end{Theorem}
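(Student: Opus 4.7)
The plan is to compute $\mathrm{Vol}(U)=\int_U\sqrt{|\det g|}\,dx\,dy\,d\theta$ directly in the explicit coordinates $(x,y,\theta)$ of Section~\ref{sec:construction}, where $g$ is the AdS metric pulled back along the tautological section $s$. The key observation is that $Q(s_\theta,s_\theta)=1$, as computed in the proof of Theorem~\ref{thm:fibers}, so after $Q$-orthogonalising the horizontal derivatives against the fiber direction the metric $g$ becomes block-diagonal, and $\det g$ reduces to the Gram determinant of the horizontal $2$-plane.

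For the core computation I would set $\tilde s_z = s_z - c s_\theta = (X,0,0,Y)^T$ and $\tilde s_{\bar z} = s_{\bar z} - \bar c s_\theta = (Z,0,0,W)^T$, both $Q$-orthogonal to $s_\theta$. Using the matrix of $Q^\C$ from Section~\ref{sec:higgsandflat}, one obtains
\[
Q(\tilde s_z,\tilde s_z) = -XY,\qquad Q(\tilde s_{\bar z},\tilde s_{\bar z}) = -ZW,\qquad Q(\tilde s_z,\tilde s_{\bar z}) = -\tfrac{1}{2}(XW+YZ),
\]
whose $2\times 2$ determinant is $-\tfrac14(XW-YZ)^2$. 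Passing to the real horizontal basis via $\tilde s_x = \tilde s_z + \tilde s_{\bar z}$ and $\tilde s_y = i(\tilde s_z - \tilde s_{\bar z})$ multiplies the determinant by the Jacobian squared $(-2i)^2 = -4$, yielding $\det g = (XW-YZ)^2$ and hence $\sqrt{|\det g|} = |XW-YZ|$.

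Expanding $XW-YZ$ using the formulas of Section~\ref{sec:construction} gives
\[
XW-YZ = \bigl(h^{-2}|\gamma|^2 + k^{-2}|\alpha|^2 - k^2|\beta|^2 - h^2|\delta|^2\bigr) + 2\,\mathrm{Re}\!\left[(k^{-2}\gamma\bar\alpha - h^2\beta\bar\delta)\,g^2 e^{2i\theta}\right],
\]
where reality of the oscillatory piece relies on the identity $g^4 = k^2/h^2$ forced by the real structure. By the transversality argument of Section~\ref{sec:construction}, $XW-YZ$ vanishes nowhere, so it has constant sign on $U$, and integration over $\theta\in[0,2\pi]$ kills the $e^{\pm 2i\theta}$ contributions, leaving
\[
\int_0^{2\pi}(XW-YZ)\,d\theta = 2\pi\bigl(h^{-2}|\gamma|^2 + k^{-2}|\alpha|^2 - k^2|\beta|^2 - h^2|\delta|^2\bigr).
\]

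By Theorem~\ref{metricandvolume}(2), four times the quantity in parentheses is exactly the density of $\text{Vol}(g_1) + \text{Vol}(g_2)$ against $dx\wedge dy$, and the Euler-number identity~\eqref{eulernumber} $\int_\Sigma\text{Vol}(g_i) = 2\pi e_i$ collapses the remaining integral to
\[
\mathrm{Vol}(U) \;=\; \left|\tfrac{\pi}{2}\int_\Sigma\bigl(\text{Vol}(g_1)+\text{Vol}(g_2)\bigr)\right| \;=\; \pi^2|e_1+e_2|.
\]
The one delicate step is the reality of $XW-YZ$, which is equivalent to $g^4 = k^2/h^2$ (i.e.\ to $s$ taking values in the real part of $E^\C$); once this and the block-diagonal form of $g$ are in place the rest is mechanical bookkeeping.
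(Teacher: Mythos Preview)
Your proof is correct and follows essentially the same approach as the paper: both compute the Gram determinant of $s_z,s_{\bar z},s_\theta$ to obtain $\sqrt{|\det g|}=|XW-YZ|$ (you block-diagonalise first, the paper evaluates the $3\times3$ determinant directly), use the nonvanishing of the volume form to fix a sign, integrate out the $e^{\pm 2i\theta}$ terms over the fiber, and then invoke Theorem~\ref{metricandvolume} and~\eqref{eulernumber}. Your explicit check that $XW-YZ$ is real via $g^4=k^2/h^2$ is a nice addition the paper leaves implicit.
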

Note here $|e_1| = 2g-2$. This formula differs from the one in  \cite[Lemma 4.5.2]{TholozanThese} by a factor $2$ because the model space used there is a quotient of $\M$ by $\Z/2\Z$. 
\begin{proof}
Denote $G$ as the matrix presentation of the Lorentzian metric tensor in terms of the basis $s_z,s_{\bar z},s_{\theta}$. The volume form $d\text{Vol}=|\sqrt{|\det G|}dz\wedge d\bar z\wedge d\theta|$.

By direct calculation, we see that $Q(s_{\theta},s_{\theta})=1$, $Q(s_z,s_{\theta})=c$, $Q(s_{\bar z},s_{\theta})=\bar c$, $Q(s_z,s_z)=-XY+c^2$, $Q(s_{\bar z},s_{\bar z})=-ZW+{\bar c}^2$, $Q(s_z,s_{\bar z})=-\frac{1}{2}(XW+YZ)+c\bar c$.
\vspace{0.2cm}\\
Hence $G=\begin{pmatrix}-XY+c^2&-\frac{1}{2}(XW+YZ)+c\bar c&c\\-\frac{1}{2}(XW+YZ)+c\bar c&-ZW+{\bar c}^2&\bar c\\c&\bar c&1\end{pmatrix}$, and then

\begin{equation*}
\text{det} (G)=XY\cdot ZW-\frac{1}{4}(XW+YZ)^2=-\frac{1}{4}(XW-YZ)^2.
\end{equation*}

Therefore the volume of AdS 3-manifold is 
\vspace{0.1cm}\\ \noindent
$\displaystyle\int_U d\text{Vol}=\frac{1}{2}\int_U |XW-YZ|dz\wedge d\bar z\wedge d\theta$\vspace{0.1cm}=\\
The non-degeneracy of volume form implies $XW-YZ$ has constant sign on $U$.\vspace{0.1cm}\\
$\displaystyle\hphantom{\int_U d\text{Vol}}=\frac{1}{2}\left|\int_U (XW-YZ)dz\wedge d\bar z\wedge d\theta\right|=$\vspace{0.1cm}\\
$\displaystyle\hphantom{\int_U d\text{Vol}}=\frac{1}{2}\left|\int_{\Sigma}\int_{S^1}((\gamma\bar \alpha g^2e^{2i\theta}k^{-2}+|\alpha|^2k^{-2}+|\gamma|^2h^{-2}+\bar \gamma \alpha h^{-2}g^{-2} e^{-2i\theta})\right.$\vspace{0.1cm}\\
$\displaystyle\hphantom{\int_U d\text{Vol}=\ \ }\left.\vphantom{\int}-(\beta\bar\delta g^2 e^{2i\theta}h^2+|\delta|^2h^2+|\beta|^2k^2+\delta\bar\beta k^2 g^{-2}e^{-2i\theta}))dz\wedge d\bar z\wedge d\theta\right|=$\vspace{0.1cm}\\
$\displaystyle\hphantom{\int_U d\text{Vol}}=2\pi\left|\int_{\Sigma}((|\alpha|^2k^{-2}-|\beta|^2k^2)+(|\gamma|^2h^{-2}-|\delta|^2h^2))dx\wedge dy\right|=$\vspace{0.1cm}\\
Applying theorem \ref{metricandvolume} and equation (\ref{eulernumber}),
\vspace{0.1cm}\\
$\displaystyle\hphantom{\int_U d\text{Vol}}=\frac{1}{2}\pi\left|\int_{\Sigma}\text{Vol}(g_1)+\text{Vol}(g_2)\right| = \pi^2 |e_1+e_2|$.
\end{proof}

\end{document}